\DeclareMathOperator{\dist}{dist} \DeclareMathOperator{\D}{D}
\DeclareMathOperator{\HolSh}{HolSh}\DeclareMathOperator{\StSh}{StSh}
\DeclareMathOperator{\LipSh}{LipSh}
\DeclareMathOperator{\Diff}{Diff}\DeclareMathOperator{\SSS}{SS}
\DeclareMathOperator{\FinHolSh}{FinHolSh}
\DeclareMathOperator{\SG}{SG}
\DeclareMathOperator{\Id}{Id}
\newcommand{\R}{\ensuremath{\mathds{R}}}
\newcommand{\Z}{\ensuremath{\mathds{Z}}}
\newcommand{\cA}{\ensuremath{\mathcal{A}}}
\newcommand{\eps}{\varepsilon}
\newcommand{\al}{\alpha}
\newcommand{\be}{\beta}
\newcommand{\lam}{\lambda}
\newcommand{\ZZ}{\mathds{Z}}
\newcommand{\Mane}{\mbox{Ma$\tilde{\mbox{n}}\acute{\mbox{e}}$}}
\newcommand{\Holder}{\mbox{H\"older }}
\newcommand{\sref}[1]{(\ref{#1})}
\newtheorem{thm}{Theorem}[section]
\newtheorem{proposition}{Proposition}[section]
\newtheorem{lem}{Lemma}[section]
\newtheorem{quest}{Question}[section]
\newtheorem{conj}{Conjecture}[section]
\theoremstyle{definition}
\newtheorem{defin}{Definition}[section]
\newtheorem{example}{Example}[section]
\newtheorem{remark}{Remark}[section]
\begin{document}


\title{\Holder Shadowing on Finite Intervals}

\author{SERGEY TIKHOMIROV \footnote{Chebyshev Laboratory, Saint-Petersburg State Univeristy, 14th Line 29B, Vasilyevsky Island, St.Petersburg 199178, Russia. Max Planck Institute for Mathematics in the Science, Inselstrasse 22, 04103 Leipzig, Germany. Email: sergey.tikhomirov@gmail.com, Sergey.Tikhomirov@mis.mpg.de}
}

\maketitle

\begin{abstract}
For any $\theta, \omega > 1/2$ we prove that, if any $d$-pseudotrajectory of length $\sim 1/d^{\omega}$ of a diffeomorphism~$f\in C^2$ can be $d^{\theta}$-shadowed by an exact trajectory, then $f$ is structurally stable. Previously it was conjectured \cite{Yorke1, Yorke2} that for $\theta = \omega = 1/2$ this property holds for a wide class of non-uniformly hyperbolic diffeomorphisms. In the proof we introduce the notion of sublinear growth property for inhomogenious linear equations and prove that it implies exponential dichotomy.
\end{abstract}

\section{Introduction and Main Results}
The theory of shadowing of approximate trajectories
(pseudotrajectories) of dynamical systems is now a well-developed
part of the global theory of dynamical systems (see,
the monographs \cite{PilBook, PalmBook} and \cite{PilRev} for overview of modern results). The shadowing problem is related to the following question: under which conditions, for any pseudotrajectory of $f$ there exists a close trajectory?

The study of this problem was originated by Anosov \cite{Ano} and Bowen \cite{Bow}. This theory is closely related to the classical theory of structural stability. It is well known that a diffeomorphism has shadowing
property in a neighborhood of a hyperbolic set \cite{Ano, Bow} and a
structurally stable diffeomorphism has shadowing property on the
whole manifold \cite{Rob, Mor, Saw}.

Let $M$ be a smooth compact manifold of class $C^{\infty}$ without
boundary with Riemannian metric $\dist$. Consider a
diffeomorphism $f \in \Diff^1(M)$.
For an interval $I = (a, b)$, where $a \in \ZZ \cup \{-\infty\}$, $b \in \ZZ \cup \{+\infty\}$ and $d>0$ a sequence of points $\{y_k\}_{k \in I}$ is called a \textit{$d$-pseudotrajectory} if the following inequalities holds
$$
\dist(y_{k+1}, f(y_k)) < d, \quad k \in \ZZ, \quad k, k+1 \in I.
$$

\begin{defin}\label{defStSh}
We say that $f$ has the \textit{standard shadowing property}
($\StSh$) if for any $\eps > 0$ there exists $d>0$ such that for any
$d$-pseudotrajectory $\{y_k\}_{k\in \ZZ}$ there exists a trajectory
$\{x_k\}_{k\in \ZZ}$ such that
\begin{equation}\label{sh}
\dist(x_k, y_k) < \eps, \quad k \in \ZZ.
\end{equation}
In this case we say that pseudotrajectory
$\{y_k\}$ is \textit{$\eps$-shadowed} by~$\{x_k\}$.
\end{defin}

Currently pseudotrajectories of finite length are almost not investigated. This problem is strongly related to the dependence between $\eps$ and $d$ in the Definition \ref{defStSh}. In the present paper we study shadowing properties on finite intervals with polynomial dependence of $\eps$ and $d$.
We give an upper bound for the length of shadowable pseudotrajectories for non-hyperbolic systems.

The most well-know definition relating $\eps$ and $d$ is the following.
\begin{defin}\label{defLipSh}
We say that $f$ has the \textit{Lipschitz shadowing property} ($\LipSh$) if
there exist constants $d_0, L > 0$ such that for any $d < d_0$ and
$d$-pseudotrajectory $\{y_k\}_{k\in \ZZ}$ there exists a trajectory
$\{x_k\}_{k\in \ZZ}$ such that inequalities~\sref{sh} hold with $\eps = Ld$.
\end{defin}
Analyzing the proofs of the first shadowing results by Anosov~\cite{Ano} and Bowen~\cite{Bow}, it is easy to see that, in a neighborhood of a hyperbolic set, the shadowing property is Lipschitz (and the same holds in the case of a
structurally stable diffeomorphism~\cite{PilBook}). Recently \cite{PilTikhLipSh} it was proved
\begin{thm}\label{thmLip} A diffeomorphism $f \in C^1$ has Lipschitz shadowing property if and only if it is structurally stable.
\end{thm}
Let us also mention works~\cite{PilVar, LipPerSh}, where a similar result was proved for the case of periodic and variational shadowing properties.

At the same time, it is easy to give an example of a diffeomorphism
that is not structurally stable but has standard shadowing property (see
\cite{PilVar}, for instance).

Let us introduce the following definition, which is the central notion under investigation in the present article.
\begin{defin}\label{defFinHolSh}
We say that $f$ has the \textit{Finite \Holder shadowing property} with
exponents \hbox{$\theta \in (0, 1)$}, $\omega \geq 0$ ($\FinHolSh(\theta, \omega)$) if there exist constants $d_0, L, C > 0$ such that for any $d < d_0$ and
$d$-pseudotrajectory $\{y_k\}_{k \in [0, Cd^{-\omega}]}$ there exists a trajectory
$\{x_k\}_{k \in [0, Cd^{-\omega}]}$ such that
\begin{equation}\label{shHol}\notag
\dist(x_k, y_k) < L d^{\theta}, \quad k \in [0, Cd^{-\omega}].
\end{equation}
\end{defin}

The main result of the present paper is the following.

\begin{thm}\label{thmMain}
If a diffeomorphism $f \in C^2$ satisfies $\FinHolSh(\theta, \omega)$ with
\begin{equation}\label{Add3.1'}
\theta > 1/2, \quad \theta + \omega > 1
\end{equation}
then $f$ is structurally stable.
\end{thm}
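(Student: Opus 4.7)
The strategy is to show that the derivative cocycle $A_k=Df(p_k)$ along every orbit $\{p_k\}$ of $f$ admits a uniform exponential dichotomy, and then to deduce structural stability via \Mane's characterization---the same high-level scheme used in~\cite{PilTikhLipSh} to prove Theorem~\ref{thmLip} starting from $\LipSh$. What must change is the linearization step, which has to accommodate the weaker finite-interval \Holder hypothesis.

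First I would linearize the shadowing assumption along an arbitrary orbit $\{p_k\}$. Fix a small $d>0$, set $N=\lfloor Cd^{-\omega}\rfloor$, and choose a test sequence $\{\xi_k\}_{k=0}^{N-1}$ with $\|\xi_k\|\le 1$. In exponential charts at the $p_k$'s, build the $d$-pseudotrajectory $\{y_k\}$ whose successive jumps $y_{k+1}-f(y_k)$ equal $d\xi_k$, and apply $\FinHolSh(\theta,\omega)$ to obtain a true orbit $\{x_k\}$ with $\dist(x_k,y_k)<Ld^{\theta}$. Setting $u_k=d^{-1}(\exp_{p_k}^{-1}(y_k)-\exp_{p_k}^{-1}(x_k))$ and Taylor-expanding $f$ to second order---this is where the $C^2$-assumption enters---one finds that $u_k$ satisfies a perturbed linear equation $u_{k+1}=A_ku_k+\xi_k+r_k$, with bound $\|u_k\|\le Ld^{\theta-1}$ and remainder $\|r_k\|=O(d\,\|u_k\|^{2})=O(d^{2\theta-1})$. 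The hypothesis $\theta>1/2$ is precisely what makes $r_k\to 0$ as $d\to 0$, so that the linearization is effective. Rewriting the solution bound via $d=(C/N)^{1/\omega}$ yields $\|u_k\|\le L'N^{(1-\theta)/\omega}$, and the hypothesis $\theta+\omega>1$ is exactly the statement $(1-\theta)/\omega<1$. Letting $d\to 0$ (so $N\to\infty$), I extract the \emph{sublinear growth property} for the cocycle $(A_k)$: for every $N$ and every input $\|\xi_k\|\le 1$ on $[0,N]$, the equation $u_{k+1}=A_ku_k+\xi_k$ admits a solution with $\sup_k\|u_k\|=o(N)$.

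The main technical step, and the step I expect to be most delicate, is to prove that sublinear growth for the cocycle along every orbit of $f$ (with uniform bounds, coming from compactness of $M$) forces a uniform exponential dichotomy. This is an analogue of the classical Maizel'--Pliss characterization---uniformly bounded solutions for all bounded inputs is equivalent to an exponential dichotomy---but under a substantially weaker hypothesis than boundedness. The natural route is a renormalization/compactness argument: decompose an interval into blocks on which the optimal solution is still sublinear, extract subsequential limits of candidate stable and unstable subspaces in the Grassmannian, and rule out a vanishing angle between them. Once uniform exponential dichotomy of $Df$ along every orbit is established, \Mane's theorem gives that $f$ is Axiom A with strong transversality, hence structurally stable, exactly as in the final step of the proof of Theorem~\ref{thmLip}.
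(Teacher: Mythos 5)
Your high-level scheme is the one the paper uses: linearize the finite-interval \Holder shadowing along an arbitrary orbit $\{p_k\}$ to get a sublinear growth property for the derivative cocycle $\{A_k = \D f(p_k)\}$, prove that sublinear growth for the cocycle forces exponential dichotomy and the transversality condition on $E_0^{s,+}+E_0^{u,-}$, and conclude via \Mane's characterization. The exponent arithmetic ($\theta > 1/2$ to beat the quadratic remainder, $\theta + \omega > 1$ to make the final exponent $(1-\theta)/\omega<1$) is also correct. However, there is a genuine gap in your linearization step, and the main technical theorem is only gestured at.

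The gap: you propose to build the pseudotrajectory directly, ``in exponential charts at the $p_k$'s, build the $d$-pseudotrajectory $\{y_k\}$ whose successive jumps equal $d\xi_k$,'' and then to set $u_k = d^{-1}(\exp_{p_k}^{-1}(y_k) - \exp_{p_k}^{-1}(x_k))$. For this expression to make sense you need $y_k$ to remain in the injectivity neighbourhood of $\exp_{p_k}$ for all $k\in[0,N]$ with $N\sim d^{-\omega}$. But the coordinates $a_k = \exp_{p_k}^{-1}(y_k)$ satisfy $a_{k+1}\approx A_k a_k + d\xi_k$ with $a_0=0$, so $|a_k|$ can be as large as $d(1 + R + \dots + R^k)$, which blows up at exponential rate long before $k$ reaches $d^{-\omega}$. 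Nothing in your construction keeps the pseudotrajectory near the reference orbit. You also do not explain how to dispose of the remainder $r_k = O(d^{2\theta-1})$; it is small but nonzero, and it needs to be absorbed, not just dropped. The paper handles both difficulties simultaneously through a bootstrap that your outline omits: it first considers the worst-case \emph{minimal} solution bound $Q(i,N)$ for the linear problem, takes the extremizing input $\{w_k\}$ and the extremizing bounded solution $\{v_k\}$ (so $|v_k|\le Q$), and only then builds $y_k = \exp_{p_k}(d\,v_k)$ with the self-referential scale $d=\varepsilon/Q^{2+\beta}$. With this choice $|d\,v_k|\le \varepsilon/Q^{1+\beta}$ stays small automatically, the quadratic error yields a genuine $(S+2)d$-pseudotrajectory, and the remainder in the returned linear equation is absorbed by one more application of the definition of $Q$, producing a closed inequality $L_4 Q^{1-\gamma'} + L_5 Q^{1-\beta}\ge Q$ that forces $Q$ to be bounded (this is Case~2), while Case~1 handles the regime where the sublinear bound in $N$ is immediate. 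Without this bootstrap --- specifically, without the idea of seeding the pseudotrajectory from the extremal linear solution and scaling $d$ by a power of the unknown $Q$ --- the linearization does not go through.

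Finally, you describe the deduction ``sublinear growth $\Rightarrow$ exponential dichotomy'' as the most delicate step and sketch a renormalization/Grassmannian compactness route. The paper instead argues by induction on $\dim E_k$: the one-dimensional case is settled by a product-formula computation showing that on any window either the forward product exceeds $2$ or the shifted product falls below $1/2$, and the inductive step projects onto the orthogonal complement of a propagated direction and combines two dichotomies via the Coppel--Maizel bounded-solution criterion, with a separate short argument (Lemma~\ref{lemSTC}) for the transversality condition (A2). Your sketch is too thin to evaluate as an alternative proof; in particular the ``rule out a vanishing angle'' step is exactly where the work would be, and it is not indicated how sublinearity (as opposed to boundedness) would be exploited.
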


Note that previously  S. Hammel, J. Yorke and C. Grebogi  based on results of numerical experiments conjectured the following \cite{Yorke1, Yorke2}:
\begin{conj}
A typical dissipative map $f: \R^2  \to \R^2$ satisfies $\FinHolSh(1/2, 1/2)$.
\end{conj}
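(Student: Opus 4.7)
This is the Hammel--Yorke--Grebogi conjecture, a long-standing open problem supported by numerical experiments, so any proposal is necessarily heuristic. The plan is to localize to a (possibly non-uniformly) hyperbolic attractor, reduce shadowing to solving an inhomogeneous cocycle equation along the linearized dynamics, and exploit random-walk cancellations of the errors to obtain the $d^{1/2}$ bound on intervals of length $d^{-1/2}$.

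First, I would invoke a genericity hypothesis: a typical dissipative map $f\colon \R^2 \to \R^2$ admits an attractor carrying an SRB measure together with a Pesin block structure, so that on a set of nearly full measure one has a well-defined splitting $E^s(x)\oplus E^u(x)$ with Lyapunov exponents $\lambda^s < 0 < \lambda^u$. Outside a small ``bad'' set where the hyperbolicity degenerates, the local geometry is effectively uniformly hyperbolic with controlled constants.

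Second, for a $d$-pseudotrajectory $\{y_k\}$, define the error vectors $e_k := \exp_{f(y_k)}^{-1}(y_{k+1})$, each of norm at most $d$, and linearize the shadowing equation to the inhomogeneous cocycle equation $v_{k+1} = Df(y_k) v_k + e_k$, where $\|v_k\|$ controls the distance to $y_k$. Split $e_k = e_k^s + e_k^u$ along the Oseledets decomposition. The stable components sum to a contribution of order $d$ solved forward in time, while the unstable components are absorbed by an appropriate choice of initial condition, producing at time $N$ an accumulated unstable deviation of the form $\sum_{k=0}^{N} (Df^{N-k}|_{E^u})^{-1} e_k^u$, a sum of $N$ terms of typical size $d$.

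Third, I would argue by a central-limit-type heuristic: if the unit vectors $e_k^u/\|e_k^u\|$ have sufficiently decorrelated signs along the cocycle, the sum has root-mean-square size $\sim d\sqrt{N}$ instead of the naive $d N$, so the bound $d\sqrt{N}\leq d^{1/2}$ permits $N \leq d^{-1}$. The more conservative exponent $\omega = 1/2$ conjectured by Hammel, Yorke, and Grebogi is then explained by the typical hitting time to the ``bad'' set being of order $d^{-1/2}$, beyond which non-uniform hyperbolicity temporarily breaks the estimate and fresh cancellation must restart.

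The main obstacle, and the reason the conjecture is still open, is that the vectors $e_k$ are dictated by the dynamics rather than freely chosen, so genuine independence fails and classical central limit theorems do not apply directly. To make the cancellation rigorous one would need concentration inequalities for Birkhoff-type sums along non-uniformly hyperbolic cocycles (of Gou\"ezel or Melbourne--Nicol type), together with large-deviation estimates for return times to the Pesin blocks, and one must control the interaction of these probabilistic statements with the adversarial choice of pseudotrajectory. This coupling of ergodic-theoretic estimates with worst-case shadowing is where I expect the essential difficulty to lie.
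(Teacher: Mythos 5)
The statement you were asked to prove is a \emph{conjecture}, not a theorem of the paper: it is the Hammel--Yorke--Grebogi conjecture, stated in Section~\ref{sec2} purely as background and attributed to \cite{Yorke1, Yorke2}. The paper offers no proof of it, and indeed it remains open. What the paper \emph{does} prove is complementary: Theorem~\ref{thmMain} shows the conjectured exponents are essentially optimal (any $C^2$ map satisfying $\FinHolSh(\theta,\omega)$ with $\theta>1/2$ and $\theta+\omega>1$ is already structurally stable), and Example~\ref{exMain} exhibits a non--structurally-stable circle diffeomorphism that does satisfy $\FinHolSh(1/2,1/2)$, showing the conjectured bounds can be attained outside the hyperbolic world. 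So there is no ``paper's own proof'' against which to compare your proposal; you have attempted something the paper explicitly leaves open, and indeed frames as the boundary its main theorem shows one cannot cross.

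As for the proposal itself: it is a reasonable heuristic sketch of the standard intuition behind the conjecture, and you have correctly identified the central obstruction, namely that the error vectors $e_k$ of a pseudotrajectory are adversarial rather than independent, so central-limit or concentration arguments for cocycle sums do not apply directly. Two further points worth flagging. First, your claim that a ``typical dissipative map'' has an SRB measure and Pesin block structure is itself a major open problem (Palis-type conjectures), so the genericity step is not currently available. Second, your scaling argument conflates $\omega$ with the square-root gain: you derive $N\le d^{-1}$ from the $d\sqrt N$ bound and then posit $N\sim d^{-1/2}$ from return times to Pesin blocks, but the two mechanisms are not reconciled, and the conjecture as stated pins both exponents at $1/2$ for a reason you leave unexplained. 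In short, the proposal is an honest and sensible outline of a plausible attack on an open conjecture, but it does not (and in the current state of the art cannot) constitute a proof, and it does not correspond to any argument present in the paper.
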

This conjecture suggests us that Theorem \ref{thmMain} cannot be improved.

Theorem \ref{thmMain} has an interesting consequence even for the case of    infinite pseudotrajectories.
\begin{defin}\label{defHolSh}
We say that $f$ has \textit{\Holder shadowing property} with
exponent $\theta \in (0, 1)$ ($\HolSh(\theta)$) if there exist
constants $d_0, L > 0$ such that for any $d < d_0$ and
$d$-pseudotrajectory $\{y_k\}_{k \in \ZZ}$ there exists a trajectory
$\{x_k\}_{k \in \ZZ}$ such that inequalities~\sref{sh} hold with $\eps = Ld^{\theta}$.
\end{defin}

It is easy to see that for any $\theta \in (0, 1)$ and $\omega \geq 0$ the following inclusions hold
$$
\SSS = \LipSh \subset \HolSh(\theta) = \FinHolSh(\theta, +\infty) \subset \FinHolSh(\theta, \omega),
$$
where $\SSS$ denotes the set of structurally stable diffeomorphisms and $\LipSh$, $\HolSh$, $\FinHolSh$ denote sets of diffeomorphisms satisfying the corresponding shadowing properties.

The following theorem is a straightforward consequence of Theorem \ref{thmMain}.

\begin{thm}\label{thmHol}
If a diffeomorphism $f \in C^2$ satisfies $\HolSh(\theta)$ with $\theta > 1/2$ then $f$ is structurally stable.
\end{thm}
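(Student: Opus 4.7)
The plan is to reduce immediately to Theorem \ref{thmMain} by showing that the hypothesis $\HolSh(\theta)$ with $\theta > 1/2$ already places $f$ inside $\FinHolSh(\theta,\omega)$ for a suitably chosen $\omega$, then invoke structural stability from the main theorem.

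First I would verify the inclusion $\HolSh(\theta) \subset \FinHolSh(\theta,\omega)$ already announced in Section~\ref{sec2}. Given constants $d_0, L$ witnessing $\HolSh(\theta)$, pick any $C>0$ and any $\omega>0$. For $d<d_0$ and any finite $d$-pseudotrajectory $\{y_k\}_{k\in[0,Cd^{-\omega}]}$, extend it to a bi-infinite $d$-pseudotrajectory $\{\tilde y_k\}_{k\in\Z}$ by setting
$$
\tilde y_k = f^{k}(y_0) \text{ for } k<0, \qquad \tilde y_k = y_k \text{ for } k\in[0,Cd^{-\omega}], \qquad \tilde y_k = f^{k-N}(y_N) \text{ for } k>N,
$$
where $N=\lfloor Cd^{-\omega}\rfloor$. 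The gluing points contribute zero discrepancy and the original segment was a $d$-pseudotrajectory, so $\{\tilde y_k\}$ is a $d$-pseudotrajectory on $\Z$. Applying $\HolSh(\theta)$ gives an exact trajectory $\{x_k\}_{k\in\Z}$ with $\dist(x_k,\tilde y_k)<Ld^\theta$ for every $k\in\Z$; restricting to $[0,N]$ yields the required finite shadowing with the same constants $L$ and $d_0$, so $f\in\FinHolSh(\theta,\omega)$.

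Second, I would choose $\omega$ so that the pair $(\theta,\omega)$ satisfies condition \sref{Add3.1'} of Theorem~\ref{thmMain}. The hypothesis $\theta>1/2$ is already the first half of \sref{Add3.1'}; for the second half $\theta+\omega>1$, any $\omega>1-\theta$ works. Taking for concreteness $\omega=1$ (so that $\theta+\omega=\theta+1>1$) together with the inclusion proved in the previous step shows $f\in\FinHolSh(\theta,1)$ with $\theta>1/2$, $\theta+1>1$. Theorem~\ref{thmMain} then delivers structural stability of $f$.

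There is no real obstacle here beyond being careful that the extension of a finite pseudotrajectory to an infinite one preserves the $d$-pseudotrajectory condition (which it does because exact orbit segments contribute distance~$0$ and the two ``gluing'' steps at $k=-1$ and $k=N$ have vanishing error). All genuine hyperbolicity/shadowing work is pushed into Theorem~\ref{thmMain}.
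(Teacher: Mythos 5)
Your proposal is correct and is exactly the route the paper takes: the paper states the inclusion $\HolSh(\theta)\subset\FinHolSh(\theta,\omega)$ for any $\omega>0$ (just before Theorem~\ref{thmMain}) and then presents Theorem~\ref{thmHol} as an immediate consequence. Your extension construction — prolonging the finite segment by exact orbit tails, checking the gluing steps contribute zero error, and taking any $\omega>1-\theta$ (your $\omega=1$ works) so that \sref{Add3.1'} holds — is precisely the ``easy to see'' verification the paper leaves to the reader.
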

Note that this theorem generalizes Theorem \ref{thmLip}. Let us also mention a related work \cite{PujKor}, where some consequences of \Holder shadowing for 1-dimensional maps were proved.

It is worth to mention a relation between Theorem \ref{thmHol} and a question suggested by Katok:
\begin{quest}\label{q2}
Is every diffeomorphism that is \Holder conjugate to an Anosov diffeomorphism itself Anosov?
\end{quest}

Recently it was shown that in general the answer to
Question \ref{q2} is negative~\cite{Gog}. At the same time the following positive result was proved in \cite{Gog, Fisher}.
\begin{thm}\label{thmGog}
A $C^2$-diffeomorphism that is conjugate to an Anosov diffeomorphism
via \Holder conjugacy $h$ is Anosov itself, provided that the product
of \Holder exponents for $h$ and $h^{-1}$ is greater than $1/2$.
\end{thm}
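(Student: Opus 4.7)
My plan is to derive Theorem~\ref{thmGog} from the paper's main result (Theorem~\ref{thmMain}) by transporting shadowing across the Hölder conjugacy. Write $g = h \circ f \circ h^{-1}$ for the Anosov diffeomorphism to which $f$ is conjugate, and let $\alpha,\beta \in (0,1]$ be the Hölder exponents of $h$ and $h^{-1}$, so $\alpha\beta > 1/2$ by assumption.

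First I would transport pseudotrajectories across $h$. Given a $d$-pseudotrajectory $\{x_k\}$ of $f$, the Hölder bound for $h$ gives
\[
\|h(x_{k+1}) - g(h(x_k))\| = \|h(x_{k+1}) - h(f(x_k))\| \le C \|x_{k+1} - f(x_k)\|^{\alpha} \le C d^{\alpha},
\]
so $\{h(x_k)\}$ is a $(Cd^\alpha)$-pseudotrajectory of $g$. Since $g$ is Anosov it satisfies Lipschitz shadowing on all of $M$, so there is an exact $g$-orbit $\{y_k\}$ with $\|y_k - h(x_k)\| \le LC d^{\alpha}$. Applying $h^{-1}$ gives an exact $f$-trajectory $\{h^{-1}(y_k)\}$, and the Hölder bound for $h^{-1}$ yields
\[
\|h^{-1}(y_k) - x_k\| \le C'(LC d^{\alpha})^{\beta} \le K d^{\alpha\beta}.
\]

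Consequently $f$ has $Kd^{\theta}$-shadowing of $d$-pseudotrajectories on intervals of arbitrary length with $\theta = \alpha\beta > 1/2$. In particular the hypothesis of Theorem~\ref{thmMain} is satisfied for this $\theta$ and any $\omega > 1/2$, so $f$ is structurally stable.

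It remains to upgrade structural stability to Anosov. Structural stability implies Axiom A, so $\Omega(f)$ carries a hyperbolic splitting; the conjugacy identifies non-wandering sets, $\Omega(f) = h^{-1}(\Omega(g)) = h^{-1}(M) = M$, and an Axiom A diffeomorphism with $\Omega=M$ is Anosov. I expect the only serious difficulty to lie in the invocation of Theorem~\ref{thmMain}; the remaining ingredients are just a composition of the two Hölder estimates with the classical shadowing lemma for Anosov systems.
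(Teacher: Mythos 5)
The paper itself does not prove Theorem~\ref{thmGog}: it is quoted as a known result of \cite{Gog,Fisher}, and the present paper's contribution is Theorem~\ref{thmHol}, whose conclusion is structural stability rather than Anosov --- which is why the introduction says Theorem~\ref{thmHol} \emph{generalizes} rather than reproves Theorem~\ref{thmGog}. With that caveat, the first part of your argument is exactly the reduction the paper alludes to, and it is correct: pushing a $d$-pseudotrajectory of $f$ through the $\alpha$-H\"older map $h$ gives a $Cd^{\alpha}$-pseudotrajectory of the Anosov diffeomorphism $g$; Lipschitz shadowing of $g$ and the $\beta$-H\"older bound for $h^{-1}$ return an exact $f$-orbit within $O(d^{\alpha\beta})$, so $f\in\HolSh(\alpha\beta)$ with $\alpha\beta>1/2$, and Theorem~\ref{thmHol} (or Theorem~\ref{thmMain} with any admissible $\omega$) gives structural stability of $f$.

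The genuine gap is in the last step, upgrading ``structurally stable'' to ``Anosov''. You write $\Omega(g)=M$, but for a general Anosov diffeomorphism this equality is precisely Smale's transitivity conjecture: it is known on tori and infranilmanifolds but open in general, so it cannot be invoked here. A correct upgrade should instead use the conjugacy more carefully. Since $f$ is structurally stable it is Axiom~A with strong transversality; the homeomorphism $h$ carries the smooth stable and unstable manifolds of its basic sets to those of $g$, and since topological dimension is a homeomorphism invariant, every basic set of $f$ has the same index $(s_g,u_g)$ with $s_g+u_g=\dim M$. Consequently $\dim B^+(p)+\dim B^-(p)=\dim M$ for every $p$, and together with \Mane's condition $B^+(p)+B^-(p)=T_pM$ this forces a genuine invariant direct-sum splitting $B^+(p)\oplus B^-(p)=T_pM$ on all of $M$, from which Anosov follows (after checking uniformity of the hyperbolicity constants). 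Alternatively, one can simply note that \cite{Gog,Fisher} obtain the Anosov conclusion directly by a cone-field argument through the conjugacy, without passing through structural stability at all; the shadowing route taken here genuinely yields only the weaker conclusion unless this extra index argument is supplied.
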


It is easy to show that diffeomorphisms which are \Holder conjugate to a structurally stable one satisfy \Holder shadowing property. As a consequence of Theorem \ref{thmHol} we prove that a $C^2$-diffeomorphism that is conjugate to a structurally stable diffeomorphism via \Holder conjugacy $h$ is structurally stable itself, provided that the product of \Holder exponents for $h$ and $h^{-1}$ is greater than $1/2$, which generalizes Theorem~\ref{thmGog}.

To illustrate that Theorems \ref{thmMain}, \ref{thmHol} are almost sharp we give the following example.
\begin{example}\label{exMain} There exists a non-structurally stable
$C^{\infty}$-diffeomorphism $f: S^1 \to S^1$ satisfying $\HolSh(1/3)$ and $\FinHolSh(1/2, 1/2)$.
\end{example}
It is easy to see that the identity map satisfies $\FinHolSh(\theta, \omega)$ provided that $\theta + \omega \leq 1$.

The paper is organized as follows.

In section \ref{secSG} we introduce the main technical tools of the proof: the notion of slow growth property of inhomogenious linear equation and recall the notion of exponential dichotomy. We state Lemma \ref{lem1} and Theorem~\ref{SubLinThm} which are main steps of the proof and show that Theorem~\ref{thmMain} is their consequence.

In sections \ref{secHolLem}, \ref{secSGProof} we give the proofs Lemma \ref{lem1} and Theorem~\ref{SubLinThm}.

In section \ref{sec5} we describe Example \ref{exMain} and prove its properties.

\section{Slow Growth Property and Exponential Dichotomy}\label{secSG}

Consider Euclidian spaces $E_{n \in \Z}$ of dimension $m$ and a sequence $\cA = \{A_{n \in \Z} : E_n \to E_{n+1}\}$  of linear isomorphisms satisfying for some $R > 0$ the following inequalities
\begin{equation}\label{SG1.1}
\|A_n\|, \|A_n^{-1}\| < R, \quad n \in \Z.
\end{equation}

\begin{defin}
We say that a sequence $\cA$ has \textit{slow growth} property with exponent $\gamma > 0$ ($\cA \in \SG(\gamma)$) if there exists a constant $L > 0$ such that for any $i \in \Z$, $N > 0$ and a sequence $\{w_k \in E_k\}_{k \in [i+1, i+N]}$, $|w_k|\leq 1$ there exists a sequence $\{v_k \in E_k\}_{k \in [i, i+N]}$ satisfying
\begin{equation}\label{SG1.1.5}
v_{k+1} = A_k v_k + w_{k+1}, \quad k \in [i, i+N-1],
\end{equation}
\begin{equation}\label{SGAdd4.1}
|v_{k}| \leq LN^{\gamma}, \quad k \in [i, i+N].
\end{equation}
If $\cA \in \SG(\gamma)$ with $\gamma \in [0, 1)$ we say that it has \textit{sublinear growth} property. If $\cA \in \SG(0)$ we say that it has \textit{bounded solution} property.
\end{defin}

We have not found analogues of the notion of slow growth property in the literature. At the same time the notion of bounded solution property was widely investigated, for example see \cite{Maizel, Coppel, Pli, Palm1, Palm2, PalmNew, Lat, Bar}.

We prove the following relations between shadowing and sublinear growth properties.

\begin{lem}\label{lem1}
If $f$ satisfies assumptions of Theorem \ref{thmMain} then there exists $\gamma \in (0, 1)$ such that for any trajectory $\{p_k\}_{k \in \Z}$ the sequence $\{A_k = \D f(p_k)\}$ satisfies $\SG(\gamma)$.
\end{lem}

To characterize sequences satisfying sublinear growth property we need notion of exponential dichotomy (see \cite{Coppel}, for some generalisations see \cite{Bar}).

\begin{defin}\label{defED}
We say that a sequence $\cA$ has \textit{exponential dichotomy} on $\Z^+$ if there exist numbers $C > 0$, $\lam \in (0, 1)$ and a decomposition $E_k = E_k^{s, +} \oplus E_k^{u, +}$, $k \geq 0$ such that
\begin{equation}\notag
E_{k+1}^{\sigma, +} = A_k E_k^{\sigma, +}, \quad k \geq 0, \sigma \in \{s, u\},
\end{equation}
\begin{equation}\label{SG1.2}
|A_{k+l-1}\cdot \dots A_k v_k^s| \leq C \lam^l|v_k^s|, \quad k \geq 0, l > 0, v_k^s \in E_k^{s, +},
\end{equation}
\begin{equation}\label{SG1.3}
|A_{k+l-1}\cdot \dots A_k v_k^u| \geq \frac{1}{C} \lam^{-l}|v_k^u|, \quad k \geq 0, l > 0, v_k^u \in E_k^{u, +}.
\end{equation}
Similarly we say that $\cA$ has exponential dichotomy on $\Z^-$ if
there exist numbers $C > 0$, $\lam \in (0, 1)$ and a decomposition $E_k = E_k^{s, -} \oplus E_k^{u, -}$, $k \leq 0$ such that
\begin{equation}\notag
E_{k+1}^{\sigma, -} = A_k E_k^{\sigma, -}, \quad k < 0, \sigma \in \{s, u\},
\end{equation}
\begin{equation}\notag
|A_{k+l-1}\cdot \dots A_k v_k^s| \leq C \lam^l|v_k^s|, \quad l >0, l + k < 0, v_k^s \in E_k^{s, -},
\end{equation}
\begin{equation}\notag
|A_{k+l-1}\cdot \dots A_k v_k^u| \geq \frac{1}{C} \lam^{-l}|v_k^u|, \quad l > 0, l+k <0, v_k^u \in E_k^{u, -}.
\end{equation}
\end{defin}
Denote by $P_k^{s, +}$ the projection with the range $E_k^{s, +}$ and kernel $E_k^{u, +}$. Similarly we define $P_k^{u, +}$, $P_k^{s, -}$, $P_k^{u, -}$.

\begin{remark}\label{remH}
It is easy to show that there exists $H > 0$ such that (see for instance \cite[Remark 2.3]{Todorov})
$$
|P_k^{\sigma, a} v_k| \leq H|v_k|, \quad v_k \in E_k, \sigma \in \{s, u\}, a \in \{+, -\}, k \in \Z^a.
$$
\end{remark}
\begin{remark}
In Definition \ref{defED} we do not require the uniqueness of $E_k^{s, +}$, $E_k^{s, -}$, $E_k^{u, +}$, $E_k^{u, -}$. At the same time if $\cA$ has exponential dichotomy on $\Z^+$ then $E_k^{s, +}$ is uniquely defined and if $\cA$ has exponential dichotomy on $\Z^-$ then $E_k^{u, -}$ is uniquely defined \cite[Proposition 2.3]{PalmNew}.
\end{remark}

Recently the following were shown \cite[Theorem 1, 2]{Todorov}:
\begin{thm}\label{thmPliss}
A sequence $\cA$ has bounded solution property if and only if the following two conditions hold:
\begin{itemize}
\item[(ED)] $\cA$ has exponential dichotomy both on $\Z^+$ and $\Z^-$.
\item[(TC)] The corresponding spaces $E_0^{s, +}$, $E_0^{u, -}$ satisfy the following transversality condition
    $$
    E_0^{s, +} + E_0^{u, -} = E_0.
    $$
\end{itemize}
\end{thm}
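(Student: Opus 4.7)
The plan is to prove the two directions separately, treating sufficiency as a concrete construction and necessity via a Perron-type extraction argument.

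For the sufficiency direction, assume (A1) and (A2). Given an interval $[i, i+N]$ and inputs $\{w_k\}$ with $\sup_k |w_k| \leq W$, first consider the case $i \geq 0$. Using the $\Z^+$-dichotomy, set $\Phi(k,j) = A_{k-1}\cdots A_j$ for $k \geq j$ (identity when $k=j$) and $\Phi(k,j) = \Phi(j,k)^{-1}$ for $k < j$, and define
\[
v_k^+ = \sum_{j=i+1}^{k} \Phi(k,j)\, P_j^{s,+}\, w_j \;-\; \sum_{j=k+1}^{i+N} \Phi(k,j)\, P_j^{u,+}\, w_j .
\]
Estimates \sref{SG1.2}--\sref{SG1.3} together with Remark~\ref{remH} bound both geometric series by $CH W/(1-\lambda)$, so $|v_k^+| \leq 2CHW/(1-\lambda)$, and one checks directly that $v_{k+1}^+ = A_k v_k^+ + w_{k+1}$. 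For intervals straddling zero, build $v^+$ on $[0, i+N]$ by the formula above and $v^-$ on $[i, 0]$ by the symmetric formula using $P^{\sigma,-}$; the two particular solutions disagree at $k=0$ by some $\xi \in E_0$, which by (A2) splits as $\xi = \xi^s + \xi^u$ with $\xi^s \in E_0^{s,+}$, $\xi^u \in E_0^{u,-}$ and with $|\xi^s|, |\xi^u| \leq K|\xi|$ for a uniform $K$ (from the angle bound implicit in (A2) plus finite dimension). Propagating $\xi^s$ forward and $-\xi^u$ backward under $\cA$ yields bounded homogeneous corrections by the dichotomy estimates, and subtracting them glues $v^+$ and $v^-$ into a single solution with the required uniform (i.e., $N$-independent) bound, which is even stronger than $\gamma=0$ in \sref{SGAdd4.1}.

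For the necessity direction, assume $\cA$ has bounded solution property with constant $L$. I would construct the stable/unstable spaces intrinsically and then upgrade the bounds to exponential ones. Define
\[
E_k^{s,+} = \{\, v_k \in E_k : \sup_{n \geq k} |\Phi(n,k) v_k| < \infty \,\}, \qquad
E_k^{u,-} = \{\, v_k \in E_k : \sup_{n \leq k} |\Phi(n,k) v_k| < \infty \,\},
\]
and take $E_k^{u,+}$, $E_k^{s,-}$ as any invariant complements obtained from a diagonal/compactness argument on finite-interval bounded solutions: for each $N$, the finite-interval solution operator (input $w$, output $v$) is a linear map of norm $\leq L$; letting $N \to \infty$ along subsequences one obtains Green-type operators whose ranges give the required complementary decompositions. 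Property (A2) falls out immediately: given any $v_0 \in E_0$, apply bounded solution property to $w_k = \delta_{k,1} v_0$ and use the limit at $\pm\infty$ to write $v_0$ as a sum $v_0^{s,+} + v_0^{u,-}$.

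The main obstacle is upgrading the uniform bound $L$ on finite-interval solutions to genuine exponential rates on $E_k^{s,+}$ and $E_k^{u,+}$, i.e., establishing \sref{SG1.2}--\sref{SG1.3}. The issue is that boundedness alone is an $L^\infty$ statement, whereas dichotomy is a pointwise exponential statement, so some averaging/selection mechanism is needed. The classical device, and the one named after Pliss, is to test bounded solution property against carefully chosen step inputs $w$ supported on blocks of length $N$: if $v_k \in E_k^{s,+}$ is contracted by less than a factor $1/2$ over every block of length $N_0$ for some large $N_0$, then iterating produces inputs for which any solution must have norm of order $N/N_0$, violating the uniform bound $L$. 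This forces the existence of a definite block length on which strict contraction occurs, from which an exponential estimate is extracted by a submultiplicative/iteration argument; the symmetric argument on $E_k^{u,+}$ and its analogues on $\Z^-$ complete (A1). Uniform transversality of $E_k^{s,+}$ and $E_k^{u,+}$ (needed for Remark~\ref{remH}) is then read off from the bound $L$ applied to test inputs concentrated near $k$. This selection-of-good-times step is where all the technical weight of the theorem lies; the rest is bookkeeping.
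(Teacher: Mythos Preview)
The paper does not actually prove Theorem~\ref{thmPliss}: it is stated as a classical result attributed to Pliss~\cite{Pli}, with Remark~\ref{rem2} pointing to \cite{PilGen, Todorov} for the translation from the ODE setting to the present discrete one. So there is no ``paper's own proof'' to compare against; the paper uses this theorem (together with Theorem~\ref{thmCoppel}) as a black box.

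That said, your outline is the standard one and is essentially correct. The sufficiency direction via the Green-function formula and the gluing across $0$ using (A2) is exactly how it is done in \cite{Coppel, Palm1}. For the necessity direction, your plan is also the classical Maizel--Coppel route: define $E_k^{s,+}$, $E_k^{u,-}$ intrinsically, read off (A2) from the response to a single impulse, and then upgrade boundedness to exponential decay by testing against block inputs. One small point worth making explicit: the paper's bounded solution property is phrased on \emph{finite} intervals $[i,i+N]$ with a bound $L$ independent of $N$, whereas the classical statements are on half-lines; you should note (it is a one-line diagonal/compactness argument, which you allude to) that the finite-interval version with uniform $L$ passes to bounded solutions on $\Z^{\pm}$, after which Theorem~\ref{thmCoppel} delivers the dichotomy directly. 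In other words, within the paper's own logic the hard half of Theorem~\ref{thmPliss} is really contained in Theorem~\ref{thmCoppel}, and your ``main obstacle'' paragraph is precisely a sketch of the proof of the latter.
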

\begin{thm}\label{thmCoppel}
The following statements are equivalent.
\begin{itemize}
  \item[(i)] $\cA$ has exponential dichotomy on $\Z^+$ ($\Z^-$).
  \item[(ii)] There exists $L > 0$ such that for any sequence $\{w_k \in E_k\}$, $k \geq 0$ ($k \leq 0$), satisfying $|w_k| \leq 1$ there exists sequence $\{v_k \in E_k\}_{k \in \Z}$ such that $|v_k| \leq L$ and
      \begin{equation}\label{SGAdd2.1}
      v_{k+1} = A_k v_k + w_{k+1}
      \end{equation}
      for $k \geq 0$ ($k \leq 0$).
\end{itemize}
\end{thm}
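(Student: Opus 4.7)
My plan is to prove the two implications separately; throughout I treat the $\Z^+$ case, since the $\Z^-$ case follows by the symmetric argument (reversing time and swapping the roles of stable and unstable).

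\textbf{The direction (i)$\Rightarrow$(ii).} This is a direct Green's function construction. Let $\Phi(k,j)=A_{k-1}\cdots A_j$ for $k>j\ge 0$, and extend to $k<j$ on the unstable fibres using the restriction isomorphism $A_k^{-1}\colon E_{k+1}^{u,+}\to E_k^{u,+}$. Set
\begin{equation*}
v_k = \sum_{j=0}^{k} \Phi(k,j)\,P_j^{s,+} w_j \;-\; \sum_{j=k+1}^{\infty} \Phi(k,j)\,P_j^{u,+} w_j, \quad k\ge 0,
\end{equation*}
and $v_k:=0$ for $k<0$. A telescoping computation yields $v_{k+1}=A_k v_k + w_{k+1}$ for $k\ge 0$; the dichotomy estimates \sref{SG1.2}--\sref{SG1.3} together with the projection bound $|P_j^{\sigma,+}|\le H$ from Remark \ref{remH} majorize each sum by a geometric series in $\lam$, giving $|v_k|\le L:=2CH/(1-\lam)$ whenever $|w_j|\le 1$.

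\textbf{The direction (ii)$\Rightarrow$(i).} This is the substantive content (the classical Maizel--Perron theorem). Define the candidate stable subspace
\begin{equation*}
E_0^{s,+} := \bigl\{v_0\in E_0 : \sup_{k\ge 0}|\Phi(k,0)v_0|<\infty\bigr\},\qquad E_k^{s,+}:=\Phi(k,0)\,E_0^{s,+},
\end{equation*}
which is automatically a closed invariant subspace. The argument then splits into three steps: (1) constructing an invariant complement $E_0^{u,+}$ of $E_0^{s,+}$ in $E_0$; (2) proving exponential contraction on $E^{s,+}$; (3) proving exponential expansion on $E^{u,+}$. For (1), apply hypothesis (ii) to inhomogeneities supported at a single index $j\ge 1$; this produces a bounded linear assignment $w\mapsto v_0^{(w)}$ whose image, projected onto any algebraic complement of $E_0^{s,+}$ in the finite-dimensional space $E_0$, yields $E_0^{u,+}$. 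For step (2), the classical trick is to probe hypothesis (ii) with carefully chosen unit inhomogeneities $\{w_k\}$ aligned along the homogeneous orbit $u_k=\Phi(k,0)v_0$ on long blocks $[0,N]$, then compare the resulting bounded solution with its variation-of-constants representation; this yields a summability estimate on $\{|u_k|\}$ uniform in $N$, which combined with the two-sided bound \sref{SG1.1} (preventing sudden drops of $|u_k|$) forces an exponential rate. Step (3) is symmetric, obtained by applying (2) to the time-reversed cocycle on the unstable fibres, where $A_k$ restricts to an isomorphism.

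\textbf{The main obstacle} is step (2): upgrading the purely qualitative hypothesis ``uniformly bounded solutions exist for all bounded inhomogeneities'' into a quantitative exponential rate. All the analytic content concentrates there, and the key technical point is the resonance-type choice of test inhomogeneities adapted to the specific orbit whose decay one wishes to estimate. The complement construction in (1) and the dual argument in (3) are, by comparison, routine functional-analytic exercises once (2) is in hand.
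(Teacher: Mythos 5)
This theorem is not proved in the paper at all: it is stated as a classical fact, with the text explicitly attributing it to \cite[Chapter 3]{Coppel} and \cite{Maizel} (``Moreover, in \ldots the following is proved''). There is therefore no proof of the author's to compare your sketch against; the author relies on the literature, as is appropriate for a result of Maizel--Perron type.

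On the merits of your sketch: the direction (i)$\Rightarrow$(ii) via the dichotomy Green's function is the standard and correct argument, and the telescoping plus the geometric-series estimate using $|P_j^{\sigma,+}|\le H$ does give the bound $|v_k|\le 2CH/(1-\lam)$. For (ii)$\Rightarrow$(i), the overall architecture (define $E_0^{s,+}$ as the bounded-orbit space, choose an invariant complement, then upgrade to exponential estimates) is indeed the shape of the classical argument, and you correctly flag step (2) as the crux. However, your dispatch of step (3) as ``symmetric, obtained by applying (2) to the time-reversed cocycle on the unstable fibres'' hides a real gap. Hypothesis (ii) is a statement about the \emph{full} cocycle $\cA$ on $\Z^+$; it does not automatically pass to the restriction $\{A_k|_{E_k^{u,+}}\}$, let alone to its time reversal. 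The natural attempt---take a bounded inhomogeneity $\{w_k^u\in E_k^{u,+}\}$, extend by zero to $E_k$, invoke (ii), and project the resulting bounded solution back onto $E_k^{u,+}$---requires a uniform bound on the projections $P_k^{u,+}$, which is precisely the content of Remark \ref{remH} and is itself a \emph{consequence} of the dichotomy you are trying to establish. So the reduction is circular as stated, and the expansion estimate on the unstable complement requires its own argument (in Coppel it is obtained by a separate variation-of-constants computation comparing the bounded solution supplied by (ii) against the explicit particular solution, not by appealing to step (2) on a derived cocycle). A smaller point: your step (1) is over-engineered---in finite dimensions any algebraic complement $E_0^{u,+}$ of $E_0^{s,+}$ may be propagated by $E_k^{u,+}:=\Phi(k,0)E_0^{u,+}$ and automatically remains a complement at each $k$, so the Green's-function-type construction there is unnecessary; the delicate issue is never existence of the splitting but uniformity of the projections, which comes out only after the exponential estimates are in place.
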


\begin{remark}\label{rem2}
Such type of results were also considered in \cite{Coff, PalmNew, Vietnam, Slyus}, however we were not able to find in earlier literature statements which follow Theorems~\ref{thmPliss},~\ref{thmCoppel}. Similar results not for sequences of isomorphisms but for inhomogeneous linear systems of differential equations was obtained in \cite{Coppel, Maizel,PalmBook,Pli}. The relation between discrete and continuous settings is discussed in
\cite{PilGen}.
\end{remark}

In this paper we prove the following theorem, which is interesting by itself without relation to shadowing property.
\begin{thm}\label{SubLinThm}
If a sequence $\cA$ has sublinear growth property then it satisfies properties (ED) and (TC).
\end{thm}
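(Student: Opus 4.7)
The plan is to verify the two conditions (A1) and (A2) of Theorem \ref{thmPliss}; by that theorem their conjunction is equivalent to the bounded solution property on $\Z$, i.e.\ to $\SG(0)$, so Theorem \ref{SubLinThm} can be read as the bootstrap ``$\SG(\gamma)$ with $\gamma<1$ implies $\SG(0)$''. I would establish (A1) and (A2) in turn, each by feeding a tailored inhomogeneity into $\SG(\gamma)$ on a long symmetric interval.

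For (A1), I would prove exponential dichotomy on $\Z^+$ (the $\Z^-$ case is identical after time reversal). By Theorem \ref{thmCoppel} it suffices to exhibit, for every bounded sequence $\{w_k\}_{k\geq 0}$ with $|w_k|\leq 1$, a uniformly bounded solution $\{v_k\}_{k\geq 0}$ of $v_{k+1}=A_k v_k+w_{k+1}$. The strategy is to extend $w$ by zero to all of $\Z$ and apply $\SG(\gamma)$ on the symmetric intervals $[-N,N]$ to obtain solutions $v^{(N)}$ with $|v^{(N)}_k|\leq L(2N)^\gamma$. Since $w\equiv 0$ on $[-N,0]$, the restriction $v^{(N)}|_{[-N,0]}$ is purely homogeneous, so all $N$ backward iterates of $v^{(N)}_0$ stay below $L(2N)^\gamma$. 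Combining this constraint as $N$ varies with the sublinearity $\gamma<1$ should allow one to extract a single initial value $v^\star_0\in E_0$ whose backward iterates are uniformly controlled, so that its forward orbit (corrected by the Duhamel term from $w$) furnishes the required bounded half-line solution.

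For (A2), I would argue by contradiction. Suppose $E_0^{s,+}+E_0^{u,-}\subsetneq E_0$ and fix $e\in E_1\setminus A_0(E_0^{s,+}+E_0^{u,-})$. Consider the inhomogeneity $w_1=e$, $w_k=0$ for $k\neq 1$. Every solution of \sref{SG1.1.5} on $[-N,N]$ is a $\Z^-$-homogeneous trajectory from $v_0$ on $[-N,0]$ and a $\Z^+$-homogeneous trajectory from $v_1=A_0 v_0+e$ on $[1,N]$. Using the dichotomies from (A1) together with the projection bound $H$ of Remark \ref{remH}, the bound $|v^{(N)}|_\infty \leq L(2N)^\gamma$ forces both $|v_0^{s,-}|$ and $|v_1^{u,+}|$ to be of order $\lambda^N N^\gamma$. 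Writing $v_1^{s,+}=A_0 s$ with $s\in E_0^{s,+}$ and rearranging $v_1=A_0 v_0 + e$ then yields
\[
e = A_0(s-v_0^{u,-}) + v_1^{u,+} - A_0 v_0^{s,-} \in A_0(E_0^{s,+}+E_0^{u,-}) + O(\lambda^N N^\gamma),
\]
which contradicts the strict positivity of $\dist(e,A_0(E_0^{s,+}+E_0^{u,-}))$ as $N\to\infty$.

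The principal obstacle is the bootstrap in (A1): the raw $\SG(\gamma)$ bound $L(2N)^\gamma$ diverges with $N$, while Theorem \ref{thmCoppel} requires a uniform bound on an infinite-horizon solution. The delicate step is to leverage the fact that $w$ is supported on $\Z^+$ so that the homogeneous tail on $[-N,0]$ forces $v^{(N)}_0$ into a uniformly bounded subset of $E_0$ independently of $N$; this is exactly where $\gamma<1$ is essential, since $\SG(1)$ is already satisfied by the identity map, for which no uniform bound on half-line solutions exists.
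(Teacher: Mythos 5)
Your argument for (A2) assuming (A1) is essentially the paper's Lemma~\ref{lemSTC}: a one-step localized inhomogeneity together with the dichotomy estimates on both half-lines forces the transversal defect vector to be approximable (with exponentially small error) by elements of $E_0^{s,+}+E_0^{u,-}$, which gives the contradiction. That part is sound.

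The gap is in (A1), and it is not merely a matter of ``delicate'' bookkeeping --- the step you flag as the principal obstacle does not go through. You want to extend $w$ by zero to $\Z$, apply $\SG(\gamma)$ on $[-N,N]$ to get $v^{(N)}$ with $\|v^{(N)}\|_\infty\leq L(2N)^\gamma$, and then argue that the homogeneity of $v^{(N)}$ on $[-N,0]$ forces $v^{(N)}_0$ into a bounded set. But the constraint ``$N$ backward iterates of $v^{(N)}_0$ stay below $L(2N)^\gamma$'' does not bound $v^{(N)}_0$ in any direction along which the cocycle is backward-contracting or merely neutral: along such a direction the backward iterates are automatically small (or constant), so the constraint is vacuous and $|v^{(N)}_0|$ can be as large as $L(2N)^\gamma$. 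Even if one could confine $v^{(N)}_0$ to a fixed ball, one would still need uniform (in $k\geq 0$, and separately in $N$) control of the forward orbit $v^{(N)}_k$ to pass to a bounded half-line solution, which $\SG(\gamma)$ alone does not provide. The argument as sketched would require an exponential splitting of $E_0$ as an input --- but that splitting is exactly what (A1) asserts, so the reasoning is circular. You correctly identify that $\gamma<1$ must enter, but nothing in your sketch actually invokes $\gamma<1$ in a way that breaks the circularity.

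The paper resolves (A1) by a genuinely different mechanism. It proceeds by induction on $m=\dim E_k$. The key device (Lemma~\ref{lemSG8}) feeds a nonzero, \emph{constant} inhomogeneity in a moving frame: propagate a unit vector $e_k$ by the normalized cocycle, put $w_k=-e_k$, and write the $e_k$-component of a solution as $a_{k+1}=\lam_k a_k - 1$ with $\lam_k=|A_ke_k|$. The tension between the sublinear a priori bound $|a_k|\leq L(2N+1)^\gamma$ and the linear drift $-1$ forces, on every window of length $N$, either $\Pi(i,N)>2$ or $\Pi(i+N,N)<1/2$, i.e.\ uniform expansion or contraction along $e_k$. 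This is precisely where $\gamma<1$ is used: the lower bound $G_\gamma(N)=\frac{1}{L(2N+1)^\gamma}\bigl(1+\frac{1}{L(2N+1)^\gamma}\bigr)^{N-2}$ diverges as $N\to\infty$ only for $\gamma<1$, and for $\gamma\geq 1$ (e.g.\ the identity cocycle, which lies in $\SG(1)$) the argument fails. The inductive step reduces the $(m+1)$-dimensional cocycle to an $m$-dimensional one on the orthogonal complement of $e_k$ (which inherits $\SG(\gamma)$) plus a one-dimensional triangular part, and glues the two dichotomies via Theorem~\ref{thmCoppel}. The constant-inhomogeneity trick and the dimension-reduction induction are the missing ingredients; a zero-extended inhomogeneity simply does not probe the neutral directions of the cocycle.
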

As a consequence of this theorem we conclude that sublinear growth property and bounded solution property are in fact equivalent.

\begin{remark}
Note that sequences $\cA \in\SG(1)$ do not necessarily satisfy condition (ED). A trivial example in arbitrary dimension is $\cA = \{A_k = \Id\}$.
\end{remark}

Let us now give the proof of Theorem~\ref{thmMain}.

For a point $p\in M$ we define the following two subspaces of $T_pM$:
$$
B^+(p)=\{v\in T_pM\,:\,|Df^k(p)v|\to 0,\quad k\to +\infty\},
$$
$$
B^-(p)=\{v\in T_pM\,:\,|Df^k(p)v|\to 0,\quad k\to -\infty\}.
$$

\begin{thm}[\Mane, \cite{Mane2}]\label{thmMane} Diffeomorphism $f$ is structurally stable
if and only if
$$
B^+(p)+B^-(p)=T_pM, \quad p \in M.
$$
\end{thm}

Theorem \ref{thmMain} is a consequence of Lemma \ref{lem1} and Theorems \ref{SubLinThm}, \ref{thmMane}.

\section{Proof of Lemma \ref{lem1}}\label{secHolLem}
Let $\exp$ be the standard exponential mapping on the tangent bundle
of $M$ and let $\exp_x: T_xM \to M$ be the corresponding exponential
mapping at a point $x$.
Denote by $B(r,x)$ the ball in $M$ of radius $r$ centered at a point
$x$ and by $B_T(r,x)$ the ball in $T_xM$ of radius $r$ centered at
the origin.

There exists $\eps>0$ such that, for any $x\in M$, $\exp_x$ is a
diffeomorphism of $B_T(\eps,x)$ onto its image, and $\exp^{-1}_x$ is
a diffeomorphism of $B(\eps,x)$ onto its image. In addition, we may
assume that $\eps$ has the following property.

If $v,w\in B_T(\eps,x)$, then
\begin{equation}
\label{eq1} \frac{\mbox{dist}(\exp_x(v),\exp_x(w))}{|v-w|}\leq 2;
\end{equation}
if $y,z\in B(\eps,x)$, then
\begin{equation}
\label{eq2}
\frac{|\exp^{-1}_x(y)-\exp^{-1}_x(z)|}{\mbox{dist}(y,z)}\leq 2.
\end{equation}

Let $L, C, d_0 > 0$ and $\theta \in (1/2, 1)$, $\omega > 0$ be the constants from the definition of $\FinHolSh$. Denote $\al = \theta - 1/2$. Inequalities \sref{Add3.1'} imply that
\begin{equation}\label{Add7.1}
\al \in (0, 1/2), \quad 1/2 - \al < \omega.
\end{equation}

Since $M$ is compact and $f \in C^2$ there exists $S > 0$ such that
\begin{equation}\label{H0}
\dist(f(\exp_x(v)), \exp_{f(x)}(\D f(x) v)) \leq S |v|^2, \quad
\mbox{$x \in M$, $v \in T_xM$, $|v|< \eps$},
\end{equation}
(we additionally decrease $\eps$, if necessarily).

Fix $i \in \Z$ and $N> 0$. For an arbitrary sequence $\{w_k \in T_{p_k}M\}_{k \in [i+1, i+N+1]}$ with $|w_k| \leq
1$ consider the following equations
\begin{equation}\label{H1}
v_{k+1} = A_k v_k + w_{k+1}, \quad k \in [i, i+N].
\end{equation}

For any sequence $\{v_k \in T_{p_k}M\}_{k \in [i, i+N+1]}$ denote $\|\{v_k\}\| = \max_{k \in [i, i+N+1]} |v_k|$. For any
sequence $\{w_k \in T_{p_k}M\}_{k \in [i+1, i+N+1]}$ consider the set
$$
E(i, N, \{w_k\}) = \left\{ \mbox{$\{v_k\}_{k \in [i, i+N+1]}$ satisfies
\sref{H1}} \right\}.
$$
Denote
\begin{equation}\label{Add3.1}
F(i, N, \{w_k\}) = \min_{\{v_k\}\in E(i, N, \{w_k\})}\|\{v_k\}\|.
\end{equation}

Since $\|\cdot\| \geq 0$ is a continuous function on the linear space of
sequences $\{v_k\}$ and the set $E(i, N, \{w_k\})$ is closed it follows that the value $F(i, N,
\{w_k\})$ is well-defined. Note that a sequence $\{v_k\} \in E(i, N,
\{w_k\})$ is determined by the value $v_{i}$. Consider the sequence
$\{v_k\}$ corresponding to $v_{i} = 0$. It is easy to see that
$|v_{i+k}| \leq 1+R+R^2 + \dots + R^{k}$ for $k \in [0, N+1]$,
where $R = \max_{x \in M} \|\D f(x)\|$. Hence $F(i, N, \{w_k\}) \leq
1+R+R^2 + \dots + R^{2N}$ for any $\{|w_k|\leq 1\}$. It is easy to
see that $F(i, N, \{w_k\})$ is continuous with respect to $\{w_k\}$ and
hence
\begin{equation}\label{Add3.2}
Q = Q(i, N) = \max_{\{w_k\}, \; |w_k|\leq 1}F(i, N, \{w_k\})
\end{equation}
is well defined.

Let us choose sequences $\{w_k\}$ and $\{v_k\} \in F(i, N, \{w_k\})$
such that
$$
Q(i, N) = F(i, N, \{w_k\}), \quad F(i, N, \{w_k\}) = \|\{v_k\}\|.
$$

The definition of $Q$ and linearity of equation \sref{H1} imply the following two properties.
\begin{itemize}
\item[(Q1)]
For any sequence $\{w_k'\}_{k \in [i+1, i+N+1]}$ there exists a sequence
$\{v_k'\}_{k \in [i, i+N+1]}$ satisfying
\begin{equation}\notag
v'_{k+1} = A_k v'_k + w'_{k+1}, \quad \|\{v_k'\}\| \leq Q(i, N) \|\{w_k'\}\|.
\end{equation}

\item[(Q2)] For any sequence $\{v_k\}_{k \in [i, i+N+1]}$, satisfying \sref{H1} holds the following inequality
$$
\|\{v_k\}\| \geq Q(i, N).
$$
\end{itemize}

Relations \sref{Add7.1} imply that there exists $\be > 0$ such that the following conditions holds
\begin{equation}\label{Add9}
0<(2+\be)(1/2-\al)< 1, \quad (2+\be)\omega > 1.
\end{equation}

Denote
$$
\gamma = \frac{1}{(2+ \beta)\omega} \in (0, 1), \quad \gamma' = 1 - (2 + \be)(1/2-\al) > 0,
$$
\begin{equation}\label{H1.5}
d = \frac{\eps}{Q^{2+\beta}}.
\end{equation}
Let us prove that there exist $L' > 0$ independent of $i$ and $N$ such that
\begin{equation}\label{Add8.1}
Q(i, N) \leq L' N^{\gamma}.
\end{equation}

Below we consider two cases.

\textit{Case 1.} $C((S+2)d)^{-\omega} < N$. Then
$Q < (\eps^{\omega}(S+2)^{\omega}/C)^{\gamma}N^{\gamma}$
and inequality \sref{Add8.1} is proved.

\textit{Case 2.} $C((S+2)d)^{-\omega} \geq N$. Below we prove even a stronger statement: there exists $L' > 0$ (independent of $i$ and $N$) such that
\begin{equation}\label{Add8.3}
Q(i, N) \leq L'.
\end{equation}

Considering the trajectory $\{p'_k = f^{-i}(p_k)\}$ we can assume without loss of generality that $i = 0$.

Consider the sequence
$$
y_k = \exp_{p_{k}}(d v_{k}), \quad k \in [0, N].
$$
Let us show that $\{y_k\}$ is an $(S+2)d$-pseudotrajectory.
For $k \in [0, N]$ equations \sref{eq1}, \sref{H0} and inequalities
$|dv_k| < \eps$, $(dQ)^2 < d$ imply the following:
\begin{multline}
\dist(f(y_k), y_{k+1}) = \dist(f(\exp_{p_k}(d v_k)), \exp_{p_{k+1}}(d(A_k v_k + w_{k+1})) \leq \\
\leq \dist(f(\exp_{p_k}(d v_k)), \exp_{p_{k+1}}(d A_k v_k)) + \\
\dist(\exp_{p_{k+1}}(dA_k v_k), \exp_{p_{k+1}}(d(A_k v_k + w_k)))
\leq
\\
\leq S |d v_k|^2 + 2d \leq (S+2)d.
\end{multline}

We may assume that
\begin{equation}\label{Add3}
Q > \left((S+2)\eps/d_0\right)^{1/(2+\be)}.
\end{equation}
Indeed, the righthand side of \sref{Add3} does not depend on $N$, and if
$Q$ is smaller than the right side of~\sref{Add3} then we have already
proved \sref{Add8.1}. In the text below we make similar remarks several times to ensure that $Q$ is large enough.

Inequality \sref{Add3} implies that $(S+2)d < d_0$. Since $f \in
\FinHolSh(1/2 + \al, \omega)$ and the assumption of case 2 holds it follows that the pseudotrajectory $\{y_k\}_{k \in [0, N]}$ can be
$L((S+2)d)^{1/2+\al}$-shadowed by a
trajectory $\{x_k\}_{k \in [0, N]}$.

By reasons similar to \sref{Add3} we may assume that
$L((S+2)d)^{1/2+\al} < \eps/2$. Inequalities \sref{eq1} and
\sref{Add3} imply that for $k \in [0, N]$ the following inequalities hold
\begin{multline*}
\dist(p_k, x_k) \leq \dist(p_k, y_k) + \dist(y_k, x_k) \leq
2d|v_k| + L((S+2)d)^{1/2+\al} < \eps.
\end{multline*}
Hence $c_k = \exp_{p_k}^{-1}(x_k)$ is well-defined.

Denote $L_1 = L(S+2)^{1/2 + \al}$. Since $\dist(y_k, x_k) < L_1
d^{1/2 + \al}$, inequalities \sref{eq2} imply that
\begin{equation}\label{H2.1}
|dv_k - c_k| < 2L_1 d^{1/2 + \al}.
\end{equation}
Hence
\begin{equation}\label{Add8}
|c_k| < Q d + 2L_1 d^{1/2 + \al}.
\end{equation}
By the reasons similar to \sref{Add3} we can assume that $|c_k|<
\eps$.

Since $f(x_k) = x_{k+1}$ inequalities \sref{eq2} and \sref{H0} imply
that for $k \in [0, N]$ the following relations hold
\begin{multline}\label{H2.2}
|c_{k+1} - A_k c_k| < 2 \dist(\exp_{p_{k+1}}(c_{k+1}),
\exp_{p_{k+1}}(A_kc_k)) = \\
= 2 \dist(f(\exp_{p_k}(c_k)), \exp_{p_{k+1}}(A_kc_k)) \leq
2S|c_k|^2.
\end{multline}
Inequalities \sref{Add9}, \sref{H1.5}, \sref{Add8} imply that $|c_k|
< L_2 Qd$ for some $L_2 > 0$ independent of $N$.

Let $t_{k+1} = c_{k+1} - A_kc_k$. By inequality \sref{H2.2} it follows
that
$$
|t_k| \leq 2S|c_k|^2 \leq L_3 (Qd)^2
$$
for some $L_3 > 0$ independent of $N$. Property (Q1) implies that there exists a sequence $\{\tilde{c}_k
\in T_{p_k}M\}$ satisfying
$$
\tilde{c}_{k+1} - A_k \tilde{c}_k = t_{k+1}, \quad |\tilde{c}_k|
\leq QL_3(Qd)^2, \quad k \in [0, N].
$$
Consider the sequence $r_k = c_k -
\tilde{c}_k$. Obviously it satisfies the following conditions
\begin{equation}\label{Add2.1}
r_{k+1} = A_k r_k, \quad |r_k - c_k| \leq QL_3(Qd)^2, \quad k \in [0, N].
\end{equation}
Consider the sequence $e_k = \frac{1}{d}(dv_k - r_k)$. Equations
\sref{H2.1} and \sref{Add2.1} imply that
\begin{equation}\label{H3.1}
e_{k+1} = A_k e_k + w_k, \quad k \in [0, N]
\end{equation}
and
$$
|e_k| = \left|\frac{1}{d}\left( (dv_k - c_k) - (r_k - c_k)
\right)\right| \leq L_1 d^{-1/2 + \al} + L_3 Q^3 d, \quad k \in [0,
N].
$$
Property (Q2) implies that
$$
L_1 d^{-1/2 + \al} + L_3 Q^3 d \geq Q.
$$
By \sref{H1.5} the last inequality is equivalent to
$$
L_4Q^{-(2+\be)(-1/2 + \al)} + L_5 Q^{1-\be} \geq Q,
$$
where $L_4, L_5 > 0$ do not depend on $N$.
This inequality and \sref{Add9} imply that
$$
L_4 Q^{1-\gamma'} + L_5 Q^{1-\be} \geq Q.
$$
Hence
$$
L_4 Q^{1-\gamma'} \geq Q/2 \quad \mbox{or} \quad  L_5Q^{1-\be} \geq Q/2,
$$
and
$$
Q \leq \max((2L_4)^{1/\gamma'}, (2L_5)^{1/\be}).
$$
We have proved that there exists $L' > 0$ such that \sref{Add8.3} holds.
This completes the proof of Case 2 and Lemma
\ref{lem1}.

\section{Proof of Theorem \ref{SubLinThm}}\label{secSGProof}
Let us first prove the following.
\begin{lem}\label{lemSTC} If a sequence $\cA$ satisfies slow growth property and (ED) then it satisfies (TC).
\end{lem}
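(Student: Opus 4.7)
The plan is to argue by contradiction: assume (A1) holds but (A2) fails, and derive a contradiction with the slow growth bound. The idea is to exhibit a bounded forcing $\{w^*_k\}$ whose bounded half-line solutions cannot be glued across $k=0$, while slow growth on $[-N,N]$ forces them to be glued modulo an exponentially small error. Set $V := E_0^{s,+}+E_0^{u,-}$, a proper subspace of $E_0$. Since $A_0$ is an isomorphism, $A_0 V \subsetneq E_1$, so one may pick $\eta \in E_1$ with $|\eta|=1$ and $d := \dist(A_0^{-1}\eta,\, V) > 0$. Define $w^*_1 = \eta$ and $w^*_k = 0$ for $k\neq 1$; then $|w^*_k| \leq 1$ for all $k$.

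By Theorem~\ref{thmCoppel} applied on $\Z^+$, exponential dichotomy produces a solution $\{v^+_k\}_{k\geq 0}$ of \sref{SGAdd2.1} with forcing $w^*$ and $|v^+_k|\leq K$ for some $K$ depending only on $\cA$. On $\Z^-$ the forcing vanishes, so take $v^-_k \equiv 0$ (any other bounded choice would differ by an element of $E_0^{u,-}\subset V$). Boundedness of $v^+$ on $\Z^+$ forces $v^+_1 \in E_1^{s,+}$; combined with $v^+_1 = A_0 v^+_0 + \eta$, this gives $v^+_0 \in -A_0^{-1}\eta + E_0^{s,+}$. Since $E_0^{s,+}\subset V$, one obtains
\begin{equation*}
\dist(v^+_0 - v^-_0,\, V) = d > 0.
\end{equation*}

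For each $N\geq 1$, slow growth produces $\{u^N_k\}_{k\in[-N,N]}$ satisfying the same recursion with forcing $w^*$ and $|u^N_k|\leq L(2N)^\gamma$. On $[0,N]$ the difference $u^N - v^+$ is a homogeneous solution; decomposing $u^N_0 - v^+_0 = a^s + a^u$ along $E_0^{s,+}\oplus E_0^{u,+}$, applying the projection bound from Remark~\ref{remH} at time $N$ together with the unstable estimate \sref{SG1.3} yields
\begin{equation*}
C^{-1}\lam^{-N}|a^u| \leq |P_N^{u,+}(u^N_N - v^+_N)| \leq H\bigl(L(2N)^\gamma + K\bigr),
\end{equation*}
so $|a^u|\to 0$ as $N\to\infty$. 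Since $a^s \in V$, this gives $\dist(u^N_0 - v^+_0,\, V)\to 0$. A symmetric argument on $[-N,0]$, decomposing $u^N_0 - v^-_0 = b^s + b^u$ along $E_0^{s,-}\oplus E_0^{u,-}$ and propagating backward to $k=-N$ via the stable estimate on $\Z^-$, shows $|b^s|\to 0$; since $b^u \in V$, also $\dist(u^N_0 - v^-_0,\, V)\to 0$. The triangle inequality in $E_0/V$ then yields $\dist(v^+_0 - v^-_0,\, V)\to 0$, contradicting $d>0$.

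The only potentially delicate point is the bookkeeping on $\Z^-$: the space $E_0^{s,-}$ is generally \emph{not} contained in $V$, so it is the \emph{stable} component $b^s$ whose norm must be estimated via the backward-time dichotomy. Moreover, the stable bound in Definition~\ref{defED} is stated only for $l+k<0$, so a single extra application of the uniform bound~\sref{SG1.1} is needed to bridge the step from $k=-1$ to $k=0$. Beyond this, the argument reduces to pure linear algebra in the quotient $E_0/V$ together with the elementary fact that the exponential factor $\lam^N$ dominates the polynomial growth $N^\gamma$.
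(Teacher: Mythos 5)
Your proof is correct, but it takes a genuinely different route from the paper's. The paper places the unit forcing at $k=0$ (rather than at $k=1$, which makes the dichotomy with $A_0^{-1}$ unnecessary) and then argues \emph{directly} with an arbitrary slow-growth solution $\{v_k\}_{k\in[-N,N]}$: from $v_0 = A_{-1}v_{-1}+\eta$ and the fact that $\eta$ is at distance $a>0$ from $V:=E_0^{s,+}+E_0^{u,-}$, it deduces that either $|P_0^{u,+}v_0|$ or $|P_{-1}^{s,-}v_{-1}|$ is bounded below by a fixed constant; propagating that component to the endpoint $N$ (resp.\ $-N$) via the dichotomy estimates gives $\max(|v_N|,|v_{-N}|)\gtrsim \lam^{-N}$, contradicting $|v_k|\le L(2N+1)^\gamma$. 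Your version instead constructs the \emph{canonical} bounded half-line solutions $v^\pm$ via Theorem~\ref{thmCoppel} and then shows that the slow-growth solution $u^N$ must agree with each of them modulo $V$ up to an $O(\lam^N N^\gamma)$ error; the failure of (A2) is then expressed as the non-gluability $\dist(v_0^+-v_0^-,V)=d>0$, contradicted in the quotient $E_0/V$. Both hinge on the same dichotomy--versus--polynomial-growth mechanism, and both must handle the same off-by-one issue between $k=-1$ and $k=0$ (the paper by tracking $v_{-1}^{s,-}$ instead of a component of $v_0$, you via one extra application of \sref{SG1.1}). The paper's route is shorter and avoids invoking Theorem~\ref{thmCoppel} (whose easy direction you use as a black box); yours is slightly longer but makes more transparent \emph{why} (A2) is the precise obstruction---it is the gluing condition for the two bounded half-line solutions---and cleanly factors the argument through the quotient $E_0/V$.
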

\begin{proof}
Let $L, \gamma > 0$ be the constants from the definition of slow growth property and let $C > 0$, $\lam \in (0, 1)$ be the constants from the definition of exponential dichotomy on $\Z^{\pm}$. Let $H$ be the constant from Remark \ref{remH} for exponential dichotomies on $\Z^{\pm}$. Assume that $E_0^{s, +} + E_0^{u, -} \ne E_0$.
Let us choose a vector $\eta \in E_0 \setminus (E_0^{s, +} + E_0^{u, -})$ satisfying $|\eta| = 1$. Denote $a = \dist(\eta, E_0^{s, +} + E_0^{u, -})$. Consider the sequence $\{w_k \in E_k\}_{k \in \Z}$ defined by the formula
$$
w_k =
\begin{cases}
0, &\quad k \ne 0,\\
\eta, &\quad k = 0.
\end{cases}
$$
Take $N > 0$ and an arbitrary solution $\{v_k\}_{k \in [-N, N]}$ of
\begin{equation}\label{SG3.0.5}
v_{k+1} = A_k v_k + w_k, \quad k \in [-N, N-1].
\end{equation}
Denote $v_k^{s, +} = P_k^{s, +}v_k$, $v_k^{u, +} = P_k^{u, +}v_k$ for $k \geq 0$. Since $w_k = 0$ for $k > 0$ we conclude
\begin{equation}\notag
|v_N^{u, +}| \geq \frac{1}{C}\lam^{-(N-1)}|v_0^{u, +}|
\end{equation}
and hence
\begin{equation}\label{SG3.1}
|v_N| \geq \frac{1}{H}\frac{1}{C} \lam^{-(N-1)}|v_0^{u, +}|.
\end{equation}
Similarly we denote
$v_k^{s, -} = P_k^{s, -}v_k$, $v_k^{u, -} = P_k^{u, -}v_k$, for $k \leq 0$
and conclude
\begin{equation}\label{SG3.2}
|v_{-N}| \geq \frac{1}{H}\frac{1}{C} \lam^{-(N-1)}|v_{-1}^{s, -}|.
\end{equation}
Equality \sref{SG3.0.5} implies that
$$
v_0 = A_{-1}v_{-1} + \eta
$$
and hence
$$
\max(\dist(v_0, E_0^{s, +} + E_0^{u, -}), \dist(A_{-1}v_{-1}, E_0^{s, +} + E_0^{u, -})) \geq a/2.
$$
From this inequality it is easy to conclude that
\begin{equation}\label{SG4.1}
v_0^{u, +} \geq \frac{1}{H}\frac{a}{2} \quad \mbox{or} \quad v_{-1}^{s, -} \geq \frac{1}{R}\frac{1}{H}\frac{a}{2}.
\end{equation}
Inequalities \sref{SG3.1}-\sref{SG4.1} imply that
$$
\max(|v_N|, |v_{-N}|) \geq \frac{1}{H}\frac{1}{C}\lam^{-(N-1)}\frac{1}{R}\frac{1}{H}\frac{a}{2}.
$$
Note that for large enough $N$ the right hand side of this inequality is greater than $L(2N+1)^{\gamma}$ which contradicts to the sublinear growth property.
\end{proof}

Now let us pass to the proof of Theorem \ref{SubLinThm}.
We prove this statement by induction over~$m$ (dimension of the Euclidian spaces).
First we prove the following.
\begin{lem}\label{lemSG7.5}
Theorem \ref{SubLinThm} holds for $m =1$.
\end{lem}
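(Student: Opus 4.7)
My plan is to establish property (A1) for $m=1$; property (A2) will then follow from Lemma \ref{lemSTC}. Since the derivation for $\Z^-$ is entirely analogous to that for $\Z^+$, it suffices to prove exponential dichotomy on $\Z^+$.

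In dimension one each $A_k$ acts as multiplication by a nonzero scalar $a_k$; after a standard sign change we may take $a_k>0$, and we set $\xi_k = a_{k-1}\cdots a_0$ with $\xi_0=1$. The general solution of \sref{SG1.1.5} on $[i,i+N]$ with initial value $v_i$ is
\[
v_k = \xi_k\Bigl(\frac{v_i}{\xi_i}+T_k\Bigr),\qquad T_k = \sum_{j=i+1}^k\frac{w_j}{\xi_j}.
\]
The functional $F(i,N,\{w_k\})$ from \sref{Add3.1} is then the value of the one-dimensional weighted Chebyshev center problem $\min_u\max_k \xi_k|u+T_k|$. A standard convexity argument (the minimum of the piecewise-linear function $u\mapsto\max_k\xi_k|u+T_k|$ is attained at a two-point tie) yields the closed form
\[
F(i,N,\{w\}) = \max_{k_1<k_2\in[i,i+N]}\frac{\xi_{k_1}\xi_{k_2}|T_{k_2}-T_{k_1}|}{\xi_{k_1}+\xi_{k_2}}.
\]
Maximizing over $|w_j|\le 1$ reformulates the $\SG(\gamma)$ hypothesis as the scalar inequality
\begin{equation}\tag{$\star$}
\frac{\xi_{k_1}\xi_{k_2}}{\xi_{k_1}+\xi_{k_2}}\sum_{j=k_1+1}^{k_2}\frac{1}{\xi_j}\le L(k_2-k_1)^{\gamma},\qquad 0\le k_1<k_2.
\end{equation}

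Next I extract exponential dichotomy on $\Z^+$ from $(\star)$. The immediate consequence is a \textbf{flat-interval length bound}: if $\xi_j$ stays within a factor $M$ of $\min(\xi_{k_1},\xi_{k_2})$ throughout $[k_1,k_2]$, then the left side of $(\star)$ is at least $(k_2-k_1)/(2M^2)$, forcing $(k_2-k_1)^{1-\gamma}\le 2LM^2$. I would now argue by contradiction: if exponential dichotomy fails on $\Z^+$, then neither uniform contraction nor uniform expansion of the cocycle holds. Since the function $k\mapsto\log(\xi_{k+l}/\xi_k)$ has consecutive increments bounded by $2\log R$ and takes both non-negative and non-positive values on $\Z^+$, a discrete intermediate-value argument produces, for every $l_0$, an interval $[k,k+l]$ with $l\ge l_0$ on which $\xi_{k+l}/\xi_k$ lies in a fixed range $[c,c^{-1}]$. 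Combining this endpoint control with the bounded-ratio property $\xi_{j+1}/\xi_j\in[R^{-1},R]$, I locate inside $[k,k+l]$ a sub-interval of length $\asymp l$ on which $\xi_j$ remains in a bounded range, contradicting the length bound above once $l$ exceeds $(2LM^2)^{1/(1-\gamma)}$.

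The main obstacle is this last extraction step: a priori $\xi$ may oscillate sharply within $[k,k+l]$ even when $\xi_{k+l}\asymp\xi_k$, and if $\xi$ has a tall interior peak then $\sum 1/\xi_j$ is negligible and $(\star)$ at the outer endpoints alone gives no contradiction. I would handle this by applying $(\star)$ iteratively on sub-intervals cut at the interior extrema of $\log\xi$; each such sub-interval is either itself flat enough to trigger the length bound, or has a strictly smaller ratio of interior maximum to endpoint value, allowing a recursion that terminates after boundedly many steps (thanks to the bounded-ratio property $\xi_{j+1}/\xi_j\in[R^{-1},R]$) at a genuinely flat sub-interval of length proportional to $l$. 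Once (A1) is established in dimension one, Lemma \ref{lemSTC} yields (A2), completing the base case.
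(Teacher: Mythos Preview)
Your reformulation $(\star)$ is correct and the flat-interval length bound is valid, but the extraction step is a genuine gap. The claim that the recursion on sub-intervals ``terminates after boundedly many steps thanks to the bounded-ratio property'' is not justified: the bound $\xi_{j+1}/\xi_j\in[R^{-1},R]$ controls consecutive ratios, not the number of interior extrema of $\log\xi$ or the depth of oscillation. Your interval $[k,k+l]$ with $\xi_{k+l}\asymp\xi_k$ could contain $\sim l$ alternating peaks and valleys of height $\asymp R^{\sqrt l}$, and cutting at the global extremum gives two sub-intervals whose lengths may be very unbalanced and whose own interior oscillation is not controlled in any obvious way. You have not produced a decreasing potential that forces the recursion to stop in $O(1)$ steps while keeping a sub-interval of length $\asymp l$; as written, the argument could degenerate to sub-intervals of bounded length, which gives no contradiction. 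The discrete intermediate-value step producing $[k,k+l]$ with $\xi_{k+l}/\xi_k\in[c,c^{-1}]$ is also not spelled out: you need both ``non-contraction'' and ``non-expansion'' to furnish intervals of each type for the same large $l$, and to interlace them.

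By contrast, the paper avoids the extraction problem entirely. It never computes $Q$ in closed form; instead it feeds the specific forcing $w_k=-e_k$ into the $\SG(\gamma)$ hypothesis on an interval of length $2N$, obtaining a scalar solution $a_k$ with $|a_k|\le L(2N+1)^\gamma$ and $a_{k+1}=\lambda_k a_k-1$. The sign of $a_k$ is eventually negative, so there is a last nonnegative index $j$. On the half where $a_k>0$ one has $\lambda_k=(a_{k+1}+1)/a_k$, and the product $\Pi(i,N)$ telescopes to
\[
\frac{a_{i+N-1}+1}{a_i}\prod_{k=i+1}^{i+N-2}\Bigl(1+\tfrac{1}{a_k}\Bigr)\;\ge\;\frac{1}{L(2N+1)^{\gamma}}\Bigl(1+\tfrac{1}{L(2N+1)^{\gamma}}\Bigr)^{N-2}\longrightarrow\infty,
\]
using only $0<a_k\le L(2N+1)^\gamma$ and $\gamma<1$. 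This yields directly that for large $N$ either $\Pi(i,N)>2$ or $\Pi(i+N,N)<1/2$; a one-line consistency check then forces a global trichotomy (always expanding, always contracting, or expanding on $\Z^-$ and contracting on $\Z^+$), each of which is exponential dichotomy. Your analytic reformulation via $(\star)$ is attractive, but to close the argument you would need a clean lemma extracting a long $M$-flat sub-interval from failure of both uniform expansion and uniform contraction; the paper's telescoping trick sidesteps this entirely.
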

\begin{proof}
Choose a vector $e_0 \in E_0$, $|e_0| = 1$ and consider the sequence $\{e_k \in E_k\}_{k \in \Z}$ defined by the relations
\begin{equation}\label{SG5.1}
e_{k+1} = \frac{A_k e_k}{|A_k e_k|}, \; e_{-k-1} = \frac{A_{-k-1}^{-1} e_{-k}}{|A_{-k-1}^{-1} e_{-k}|} \quad k \geq 0.
\end{equation}
Let $\lam_k = |A_k e_k|$. Inequalities \sref{SG1.1} imply that
\begin{equation}\label{SGAdd11.1}
\lam_k \in (1/R, R), \quad k \in \Z.
\end{equation}

Denote
\begin{equation}\label{SG5.2}
\Pi(k, l) = \lam_k \cdot \dots \cdot \lam_{k+l-1}, \quad k \in \Z, l \geq 1.
\end{equation}

Let us prove the following lemma, which is the heart of the proof of Theorem \ref{SubLinThm}.
\begin{lem}\label{lemSG8}
If $m = 1$ and $\cA$ satisfies sublinear growth property then there exists $N> 0$ such that for any $i \in \Z$
$$
\Pi(i, N) > 2 \quad \mbox{or} \quad \Pi(i+N, N) < 1/2.
$$
\end{lem}
\begin{proof}
The proof follows the ideas of \cite{LipPerSh}.

Let us fix $i \in \Z$, $N > 0$ and consider the sequence
$$
w_k = -e_k, \quad k \in [i+1, i+2N+1].
$$
By sublinear growth property there exists a sequence $\{v_k\}_{k \in [i, i+2N+1]}$ satisfying
$$
v_{k+1} = A_k v_k + w_k,  \quad |v_k| \leq L (2N+1)^{\gamma}, \quad k \in [i, i+2N].
$$
Let $v_k = a_k e_k$, where $a_k \in \R$, then
\begin{equation}\label{SG5.5}
a_{k+1} = \lam_k a_k - 1, \quad |a_k| \leq L(2N+1)^{\gamma}, \quad k \in [i, i+2N].
\end{equation}
Those relations easily imply the following
\begin{proposition}\label{statAdd11}
If $a_k \leq 0$ for some $k \in [i, i+2N-1]$ then $a_{k+1} < 0$.
\end{proposition}

Below we prove the following: There exists a large $N > 0$ (depending only on $R$, $L$, $\gamma$) such that
\begin{itemize}
  \item[Case 1.] if $a_{i+N-1} \geq 0$ then $\Pi(i, N) > 2$,
  \item[Case 2.] if $a_{i+N-1} < 0$ then $\Pi(i+N, N) < 1/2$.
\end{itemize}
We give the proof of the case 1 in details, the second case is similar.
Proposition \ref{statAdd11} implies that $a_i, \dots, a_{i + N - 2} > 0, \; a_{i+N-1} \geq 0.$ Relation \sref{SG5.5} implies that
$$
\lam_k = \frac{a_{k+1}+1}{a_k}, \quad k \in [i, i+N-1].
$$
The following relations hold
\begin{multline*}
\Pi(i, N) = \frac{a_{i+1}+1}{a_i}\frac{a_{i+2}+1}{a_{i+1}} \dots  \frac{a_{i+N-1} + 1}{a_{i+N-2}} = \\
= \frac{1}{a_i} \frac{a_{i+1}+1}{a_{i+1}} \frac{a_{i+2}+1}{a_{i+2}}  \dots \frac{a_{i+N-2}+1}{a_{i+N-2}}(a_{i+N-1}+1) = \\
=\frac{a_{i+N-1}+1}{a_i} \prod_{k = i+1}^{i+N-2} \frac{a_k +1}{a_k} \geq \frac{1}{L(2N+1)^{\gamma}}\left( 1+ \frac{1}{L(2N+1)^{\gamma}} \right)^{N-2}.
\end{multline*}
Denote the latter expression by $G_{\gamma}(N)$. The inclusion $\gamma \in (0, 1)$ implies that
\begin{equation}\label{SGLim}
\lim_{N \to +\infty} G_{\gamma}(N) = +\infty
\end{equation}
and for large enough $N$ the inequality $G_{\gamma}(N) > 2$ holds, which completes the proof of Case~1.
\begin{remark}
In relation \sref{SGLim} we essentially use that $\gamma \in (0, 1)$; for $\gamma \geq 1$ it does not hold.
\end{remark}
\end{proof}
\begin{lem}\label{lemSG9}
Let $N$ be the number from Lemma \ref{lemSG8}.
\begin{itemize}
  \item[(i)] If $\Pi(i, N) > 2$ then $\Pi(i-N, N) > 2$.
  \item[(ii)] If $\Pi(i, N) < 1/2$ then $\Pi(i+N, N) < 1/2$.
\end{itemize}
\end{lem}
\begin{proof}
We prove statement (i); the second one is similar. Lemma \ref{lemSG8} implies that either $\Pi(i-N, N) > 2$ or $\Pi(i, N) < 1/2$. By the assumptions of Lemma~\ref{lemSG9} the second case is not possible and hence $\Pi(i-N, N) > 2$.
\end{proof}
Now let us complete the proof of Lemma \ref{lemSG7.5}. It is easy to conclude from Lemmas \ref{lemSG8}, \ref{lemSG9} that one of the following cases holds.
\begin{itemize}
  \item[Case 1.] For all $i \in \Z$ the inequality $\Pi(i, N) > 2$ holds. Then
      $$
      \Pi(i, l) \geq R^{N-1}(2^{1/N})^l, \quad i \in \Z, l > 0
      $$
      and hence $\cA$ has exponential dichotomy on $\Z^{\pm}$ with the splitting
      $$
      E_k^{s, \pm} = \{0\}, \quad E_k^{u, \pm} = \left<e_k\right>, \quad k \in \Z.
      $$
  \item[Case 2.] For all $i \in \Z$ the inequality $\Pi(i, N) < 1/2$ holds. Similarly to the previous case $\cA$ has exponential dichotomy on $\Z^{\pm}$ with the splitting
      $$
      E_k^{s, \pm} = \left<e_k\right>, \quad E_k^{u, \pm} = \{0\}.
      $$
  \item[Case 3.] There exist $i_1, i_2 \in \Z$ such that
  $$
  \Pi(i_1, N) > 2, \quad \Pi(i_2, N) < 1/2.
  $$
  Similarly to Case 1 the following inequality holds
  $$
  \Pi(k, l) \geq R^{N-1}(2^{1/N})^l, \quad k+l < i_1, l > 0
  $$
  and hence
  $$
  \Pi(k, l) \geq R^{|i_1| + N-1}(2^{1/N})^l, \quad k+l < 0, l > 0.
  $$
  The last inequality implies that $\cA$ has exponential dichotomy on $\Z^-$ with the splitting
      $$
      E_k^{s, -} = \{0\}, \quad E_k^{u, -} = \left<e_k\right>, \quad k \leq 0.
      $$
  Similarly
  $\cA$ has exponential dichotomy on $\Z^+$ with the splitting
      $$
      E_k^{s, +} = \left<e_k\right>, \quad E_k^{u, +} = \{0\}, \quad k \geq 0.
      $$
\end{itemize}
In all of those cases Lemma \ref{lemSG7.5} is proved.
\end{proof}

Now let us  continue the proof of Theorem \ref{SubLinThm}. Assume that Theorem \ref{SubLinThm} is proved for $\dim E_k \leq m$. Below we prove it for $\dim E_k = m+1$.

Let us choose a unit vector $e_0 \in E_0$ and consider the vectors $\{e_k\}_{k \in \Z}$ defined by relations~\sref{SG5.1}. Denote $\lam_k = |A_k e_k|$. Similarly to Lemma \ref{lemSG7.5} inclusions \sref{SGAdd11.1} hold.
For $k \in \Z$ let $S_k$ be the orthogonal complement of $e_k$ in $E_k$ and let $Q_k$ be the orthogonal projection onto $S_k$. Note that $\dim S_k = m$. Consider the linear operators $B_k : S_k \to S_{k+1}$, $D_k: S_k \to \left<e_{k+1}\right>$ defined by the following
$$
B_k = Q_{k+1}A_k, \quad D_k = (\Id - Q_{k+1})A_k, \quad k \in \Z.
$$
Note that $B_k^{-1} = Q_{k-1}A_k^{-1}$ and
\begin{equation}\label{SGAdd7.1}
\|B_k\|, \|B_k^{-1}\|, \|D_k\| < R.
\end{equation}
For any vector $b \in E_k$ denote by $b^{\perp} = P_k b, \quad b^1 = b - b^{\perp}.$ We also write $b = (b^{\perp}, b^1)$.
In such notation equations \sref{SG1.1.5} are equivalent to
\begin{equation}\label{SG9.2}
v_{k+1}^{\perp} = B_k v_k^{\perp} + w_{k+1}^{\perp},
\end{equation}
\begin{equation}\label{SG9.3}
v_{k+1}^1 = \lam_k v_k^1 + D_k v_k^{\perp} + w_{k+1}^{1}.
\end{equation}

Let us prove that the sequence $\{B_k\}$ satisfies property $\SG(\gamma)$.
Indeed, fix $i \in \Z$, $N > 0$ and consider an arbitrary sequence $\{w_k^{\perp} \in S_k\}_{k \in [i+1, i+N+1]}$ with $|w_k^{\perp}| \leq 1$. Consider the sequence $\{w_k \in E_k\}_{k \in [i+1, i+N+1]}$ defined by $w_k = w_k^{\perp}$. By the sublinear growth property there exists a sequence $\{v_k \in E_k\}_{k \in [i, i+N+1]}$ satisfying \sref{SG1.1.5}, \sref{SGAdd4.1} and hence \sref{SG9.2}. Recalling that $|v_k^{\perp}|\leq |v_k|$ we conclude that the sequence $\{B_k\}$ satisfies sublinear growth property and hence by the induction assumption if satisfies conditions (ED) and (TC) from Theorem~\ref{thmPliss}.

Below we prove that $\cA$ has exponential dichotomy on $\Z^+$. Let $\{B_i\}$ satisfy exponential dichotomy on $\Z^+$ with constants $C > 0$, $\lam \in (0, 1)$ and splitting $S_k = S_k^{s, +} \oplus S_k^{u, +}$. Let $H_1$ be the constant from Remark \ref{remH} for this splitting.

First we prove that there exists a big $N > 0$ such that for any $i \geq 2N$ the following inequality hold
\begin{equation}\label{SG29.0.7}
\Pi(i, N) > 2 \quad \mbox{or} \quad \Pi(i-N, N) < 1/2,
\end{equation}
where $\Pi(k, l)$ is defined by \sref{SG5.2}.

Let us choose $N > 0$ satisfying
\begin{equation}\label{SG29.0.5}
C \lam^N H_1L(4N)^{\gamma} < 1/(4R).
\end{equation}
and consider some $i \geq 2N$. Define a sequence $\{w_k = -e_k\}_{k \in [i-2N, i+2N]}$. By slow growth property there exists a sequence $\{v_k = (v_k^{\perp}, v_k^1)\}_{k \in [i-2N, i+2N+1]}$ satisfying the following for $k \in [i-2N, i+2N]$:
\begin{equation}\label{SG29.1.9}
v_{k+1}^{\perp} = B_k v_{k}^{\perp}
\end{equation}
\begin{equation}\label{SG29.2}
v_{k+1}^1 = \lam_k v_k^1 + D_k v_k^{\perp} - 1,
\end{equation}
\begin{equation}\label{SG29.1}
|v_k| < L(4N)^{\gamma}.
\end{equation}
Represent $v_k^{\perp} = v_k^{\perp, s} + v_k^{\perp, u}$, where
$v_k^{\perp, s} \in S_k^{s, +}$, $v_k^{\perp, u} \in S_k^{u, +}$.
Applying relations \sref{SG29.1.9}, \sref{SG29.1} and Remark \ref{remH} we conclude that
$$
|v_k^{\perp, s}|, |v_k^{\perp, u}| < H_1L(4N)^{\gamma}, k \in [i - 2N, i + 2N].
$$
Exponential dichotomy of $\{B_i\}$ implies that
$$
|v_k^{\perp, s}|, |v_k^{\perp, u}| < C \lam^N H_1L(4N)^{\gamma}, \quad k \in [i-N, i+N].
$$
By inequality \sref{SG29.0.5} we conclude that
$$
|v_k^{\perp, s}|, |v_k^{\perp, u}| < 1/(4R), \quad k \in [i-N, i+N]
$$
and hence
\begin{equation}\label{SG30.1}
|v_k^{\perp}| < 1/(2R), \quad k \in [i-N, i+N].
\end{equation}
Denote $b_k = D_kv_k^{\perp} - 1$. Inequalities \sref{SGAdd7.1} and \sref{SG30.1} imply that
$$
b_k \in (-3/2, -1/2), \quad k \in [i-N, i+N].
$$
Using those inclusions, relations \sref{SG29.2}, \sref{SG29.1} and arguing similarly to Lemma~\ref{lemSG8} (increasing $N$ if necessarily) we conclude relation \sref{SG29.0.7}.

Arguing similarly to the proof of Lemma \ref{lemSG7.5} we conclude that the linear operators generated by $\lam_i$ have exponential dichotomy on $\Z^+$.

Let us show that $\cA$ has exponential dichotomy on $\Z^+$. Consider an arbitrary sequence
$$
\{w_k = (w_k^{\perp}, w_k^1) \in E_k\}_{k \geq 0}, \quad |w_k| \leq 1.
$$
Since $\{B_k\}$ has exponential dichotomy on $\Z^+$, by Theorem \ref{thmCoppel} there exists a sequence $\{v_k^{\perp} \in S_k\}_{k \geq 0}$, satisfying \sref{SG9.2} and $|v_k| \leq L_1$, where $L_1 > 0$ does not depend on $\{w_k\}$. Inequality \sref{SGAdd7.1} implies that
$$
|D_k v_k^{\perp} + w_{k+1}^1| \leq L_1R + 1, \quad k \geq 0.
$$
Since linear operators generated by $\lam_k$ have exponential dichotomy on $\Z^+$, by Theorem \ref{thmCoppel} there exists $\{v_k^1 \in \R \}$ such that for $k \geq 0$ equalities \sref{SG9.3} hold and $|v_k^1| \leq L_2(L_1R + 1)$, where $L_2$ does not depend on $\{w_k\}$.

Hence for $k \geq 0$ the sequence $v_k = (v_k^{\perp}, v_k^1)$ satisfies \sref{SGAdd2.1} and
$$
|v_k| \leq |v_k^{\perp}| + |v_k^1| \leq L_2(L_1R + 1) + L_1.
$$
Theorem \ref{thmCoppel} implies that $\cA$ has exponential dichotomy on $\Z^+$.

Similarly $\cA$ has exponential dichotomy on $\Z^-$ and hence satisfies property (ED). By Lemma \ref{lemSTC} the sequence $\cA$ also satisfies property (TC). This completes the induction step and the proof of Theorem \ref{SubLinThm}.

\section{Example \ref{exMain}}\label{sec5}

Consider a diffeomorphism $f:S^1 \to S^1$ constructed as follows.
\begin{itemize}
\item[(i)] The nonwandering set of $f$ consists of two fixed points $s, u \in S^1$.
\item[(ii)] In some neighborhood $U_s$ of $s$ there exists a coordinate
system such that $f|_{U_s}(x) = x/2$.
\item[(iii)] In some neighborhood $U_u$ of $u$ there exists a coordinate
system such that $f|_{U_u}(x) = x + x^3$.
\item[(iv)] In $S^1 \setminus (U_s \cup U_u)$ the map is chosen to be $C^{\infty}$
and to satisfy the following condition: there exists $N> 2$ such that
$$
f^N(S^1 \setminus U_u) \subset U_s, \quad f^{-N}(S^1 \setminus U_s)
\subset U_u, \quad f^2(U_u) \cap U_s = \emptyset.
$$
\end{itemize}

\begin{thm}\label{thmEx}
If $f: S^1 \to S^1$ satisfies the above properties (i)--(iv) then $f \in
\HolSh(1/3)$ and $f \in \FinHolSh(1/2, 1/2)$.
\end{thm}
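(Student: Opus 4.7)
The plan is to split $S^1$ into three regions: $U_s$, where $f(x)=x/2$ is uniformly hyperbolic; $U_u$, where $f(x)=x+x^3$ is tangent to the identity; and the compact transit region $S^1\setminus(U_s\cup U_u)$, which every orbit crosses in uniformly bounded time $\le 2N$. On $U_s$ and on the transit region standard shadowing arguments give Lipschitz shadowing with error $\mathcal{O}(d)$, already much smaller than $d^{1/3}$ or $d^{1/2}$; the whole problem therefore reduces to analysing the portion of the pseudotrajectory contained in $U_u$ and gluing the two shadows at the exit point.

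For the analysis near $u$ I would use the discrete analogue of the explicit flow solution of $\dot x=x^3$: by summation one checks that $f^k(x_0)=x_0(1-2x_0^2 k)^{-1/2}(1+o(1))$ whenever $2x_0^2 k$ is bounded away from $1$, so the escape time of an exact orbit from a fixed small neighbourhood of $u$ is $\tau(x_0)\sim 1/(2x_0^2)$. For a $d$-pseudotrajectory $y_{k+1}=y_k+y_k^3+\delta_k$ with $|\delta_k|<d$, the deterministic drift $y_k^3$ balances the noise $\delta_k$ at the scale $|y_k|\sim d^{1/3}$: above this scale the drift dominates, forcing $|y_k|$ to grow monotonically and exit $U_u$ in $\lesssim 1/y_k^2$ steps, while at or below it the pseudotrajectory may persist in $U_u$ indefinitely.

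For $\HolSh(1/3)$ let $K=\sup\{k:y_k\in U_u\}$. If $K=+\infty$, iterating the escape estimate gives $|y_k|\le d^{1/3}$ for all $k$, and I shadow by $x_k\equiv u$, obtaining error $\le d^{1/3}$. If $K<+\infty$, I take $\{x_k\}_{k\le K}$ to be the exact backward orbit through $y_K$; since $(f^{-1})'(y)=1/(1+3y^2)<1$ on $U_u\setminus\{u\}$, the backward error equation $e_{k-1}=(1-3x_k^2+O(x_k^4))e_k-\delta_{k-1}$, combined with the telescoping identity $\prod_{j=0}^{J-1}(1+3x_{K-j}^2)^{-1}\approx(1+2Jx_K^2)^{-3/2}$ (derived from the asymptotic above) and the elementary bound $\sum_j(1+2jx_K^2)^{-3/2}\lesssim 1/x_K^2$, yields $|e_k|\lesssim d/x_K^2$ in the linearised regime; in the tail regime where $|x_k|\le d^{1/3}$, one has simultaneously $|y_k|\le d^{1/3}$ (otherwise the pseudotrajectory would have escaped $U_u$ before time $K$), so trivially $|e_k|\le 2d^{1/3}$. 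Combined, $|e_k|=\mathcal{O}(d^{1/3})$ uniformly, and gluing with the Lipschitz shadow for $k\ge K$ is routine because the two shadow points at $k=K$ agree up to $\mathcal{O}(d)$.

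For $\FinHolSh(1/2,1/2)$ the length $T=C d^{-1/2}$ is too short for a pseudotrajectory starting at any $|y_0|\gg d^{1/4}$ to remain in $U_u$ (since $\tau(y_0)\sim 1/y_0^2$ would be exceeded), so the portion of $\{y_k\}$ in $U_u$ has maximum magnitude $\mathcal{O}(d^{1/4})$. I shadow by the exact forward orbit through $y_0$; the forward error equation $e_{k+1}=(1+3x_k^2+O(x_k^4))e_k+\delta_k$ together with $\sum_{k=0}^{T}x_k^2\lesssim T\cdot d^{1/2}=\mathcal{O}(1)$ gives $\prod(1+3x_k^2)=\mathcal{O}(1)$, and hence $|e_k|\lesssim T d=\mathcal{O}(d^{1/2})$. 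The main technical obstacle is a clean justification of the linearisation and the telescoping bound in a regime where $|e_k|$ and $|x_k|$ may be comparable, and in simultaneously tracking $|x_k|$, $|y_k|$, and the product; the explicit asymptotic $f^k(x_0)\sim x_0(1-2x_0^2 k)^{-1/2}$ is essential, since a naive Gr\"onwall bound $(1+3r^2)^T$ (with $r$ the size of $U_u$) would blow up on intervals of length $T\sim d^{-1/2}$.
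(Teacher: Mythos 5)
Your decomposition into $U_s$, $U_u$, and the transit region is exactly the paper's, and so is the identification of the backward orbit through the last point of $U_u$ as the shadowing orbit for $\HolSh(1/3)$. Your linearisation/telescoping argument for that half is an essentially correct (if considerably heavier) variant of what the paper does: the paper replaces all of the explicit asymptotics by a single elementary expansion lemma ($|x-y|\geq\ep$ implies $|g(x)-g(y)|\geq\ep+\ep^3/4$ for $g(x)=x+x^3$) plus an extremal-index argument, which gives directly $\dist(y_k, f^{k-l}(y_l))<2d^{1/3}$ for $k\le l$ and $\dist(y_k,u)<2d^{1/3}$ in the all-of-$U_u$ case, with no need to separate a ``linearised regime'' from a ``tail regime'' or to justify the asymptotic $f^k(x_0)\sim x_0(1-2x_0^2k)^{-1/2}$. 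Also note that by taking the shadow to be the full orbit of $y_l$ itself (not just the backward half) the error at the transition index is literally zero, so there is no gluing step at all.

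The $\FinHolSh(1/2,1/2)$ half contains a genuine gap, and in fact the shadowing orbit you propose is the wrong one. You shadow forward from $y_0$; the paper shadows backward from the last $U_u$-point $y_n$. The forward choice fails for two reasons. First, the claim that ``the portion of $\{y_k\}$ in $U_u$ has maximum magnitude $\mathcal{O}(d^{1/4})$'' is false: the escape-time bound tells you that a pseudotrajectory starting with $|y_0|\gg d^{1/4}$ leaves $U_u$ well before time $T$, but it says nothing about the magnitude the pseudotrajectory reaches while it is still in $U_u$ --- a pseudotrajectory starting at $|y_0|\sim d^{1/4}$ climbs all the way up to the boundary of $U_u$, i.e. to magnitude $\sim r$ independent of $d$, within time $\lesssim d^{-1/2}$. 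Second, and as a consequence, the product $\prod_k(1+3x_k^2)$ along the exact forward orbit $x_k=f^k(y_0)$ is not $\mathcal{O}(1)$; using your own asymptotic it is $\approx(x_{n}/x_0)^3\sim (r/d^{1/4})^3$ when the orbit crosses $U_u$, so the forward-accumulated error at the exit is of order $d\cdot d^{-3/4}$ up to constants, which dwarfs $d^{1/2}$. The fix is the same device you already used for the infinite case: shadow by $\{x_k=f^{k-n}(y_n)\}$ and go backward. The expansion lemma then gives the clean nonlinear bound $\dist(y_{n-k},f^{-k}(y_n))\le dk$ for $k\in[0,n]$, and since $n\le Cd^{-1/2}$ this is $\le Cd^{1/2}$; forward of $n$ the orbit and the pseudotrajectory both leave $U_u$ and Lipschitz shadowing takes over with error at index $n$ exactly zero. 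This bound is uniform in the starting magnitude, never uses linearisation, and never needs the product to be controlled.
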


\begin{proof}
First let us prove a technical statement.
\begin{lem}\label{lemx3}
Denote $g(x) = x+x^3$. If $ |x-y| \geq \eps $ then
$$
|g(x) - g(y)| \geq \eps + \eps^3/4.
$$
\end{lem}

\begin{proof}
Using inequality $x^2+xy+y^2 > (x-y)^2/4$ we deduce that
\begin{multline*}
|g(x)- g(y)| = |x+x^3 - y - y^3|  = |(x-y)(1+x^2+xy+y^2)| \geq \\
\geq |(x-y)||1 + (x-y)^2/4| \geq \eps(1+\eps^2/4).
\end{multline*}
\end{proof}

We divide the proof of Theorem \ref{thmEx} into several propositions.

\begin{proposition}\label{propCase1}Conditions (ii), (iii) imply that there
exists $d_1 > 0$ such that
\begin{equation}\label{ex2}
B(d_1, f(U_s)) \subset U_s, \quad B(d_1, f^{-1}(U_u)) \subset U_u,
\quad B(d_1, f(S^1 \setminus U_u)) \subset S^1 \setminus U_u.
\end{equation}
Since $f|_{U_s}$ is hyperbolically contracting there exist $L > 0$ and $d_2 \in (0,d_1)$ such that for any
$d$-pseudotrajectory $\{y_k\}$ with $d< d_2$ and $y_0 \in S^1
\setminus U_u$ the following conditions hold
\begin{itemize}
\item $\{y_k\}_{k \geq 0} \subset S^1\setminus U_u$,

\item
$ \dist(f^k(x_0), y_k) < Ld,$ for $x_0 \in B(d, y_0), \; k \geq 0, $
\item if $\{y_k\}_{k \in \ZZ} \subset
S^1\setminus U_u$ then $\{y_k\}_{k \in \ZZ}$ can be $Ld$-shadowed by
a trajectory.
\end{itemize}
\end{proposition}

\begin{proposition}\label{propCase2} For any $d$-pseudotrajectory $\{y_k\}_{k \leq 0}$
with $d< d_1$ and $y_0 \in U_u$ the following inequality holds
\begin{equation}\label{ex4}
\dist(y_k,f^k(y_0))< 2 d^{1/3}, \quad k \leq 0.
\end{equation}
\end{proposition}
\begin{proof}
Proposition \ref{propCase1} implies that $y_k \in U_u$ for $k < 0$. Assume
\sref{ex4} does not hold. Let
$$
l = \max\{k\leq 0 : \dist(y_k,
f^k(y_0)) \geq 2 d^{1/3}\}.
$$
Note that $l<0$. Lemma \ref{lemx3}
implies that
$$
\dist(f(y_l), f^{l+1}(y_0)) > 2 d^{1/3} + 2d.
$$
Hence $\dist(y_{l+1}, f^{l+1}(y_0)) > 2 d^{1/3}$, which contradicts
to the choice of $l$.
\end{proof}

\begin{proposition}\label{propCase3}  If $\{y_k\}_{k \in \ZZ} \subset U_u$ is a
$d$-pseudotrajectory with $d< d_1$ then
\begin{equation}\label{ex5}
\dist(y_k, u) < 2d^{1/3}, \quad k \in \ZZ.
\end{equation}
\end{proposition}
\begin{proof}
Let us identify $y_k$ with its coordinate in the system
introduced in (iii) above and consider $Y = \sup_{k \in \ZZ}|y_k|$. Assume
that $Y > 2d^{1/3}$; then there exists $k \in \ZZ$ such that
\begin{equation}\label{ex3}\notag
|y_k| > \max(2d^{1/3}, Y-d/2).
\end{equation}
Without loss of generality we may assume that $y_k > 0$. Since $y_k
\in U_u$ the following holds
$$
f(y_k) - y_k = y_k^3 > 2d.
$$
Hence $y_{k+1} - y_k> (f(y_k) - y_k)-d > d$ and $y_{k+1} > Y+d/2$,
which contradicts to the choice of $Y$. Inequalities \sref{ex5} are
proved.
\end{proof}

\begin{proposition}\label{propCase4} For any $d$-pseudotrajectory $\{y_k\}_{k \in [0, n]}$ with $d < d_1$ and $y_n \in U_u$ the following inequality holds
\begin{equation}\label{Add17.1.1}
\dist(y_{n-k}, f^{-k}(y_n)) \leq dk, \quad k \in [0, n].
\end{equation}
\end{proposition}
\begin{proof}
Proposition \ref{propCase1} implies that $y_k \in U_u$ for $k \in [0, n]$. Assume that \sref{Add17.1.1} does not hold. Denote
$$
l = \min \{k \in [0, n] : \dist(y_{n-k}, f^{-k}(y_n)) > dk \}.
$$
Note that $l>0$. Lemma \ref{lemx3} implies that
$$
\dist(f(y_{n-l}), f^{-l+1}(y_n)) > ld
$$
and hence
$$
\dist(y_{n-l+1}, f^{-l+1}(y_n)) > (l-1)d,
$$
which contradicts to the choice of $l$.
\end{proof}

Now we are ready to complete the proof of Theorem \ref{thmEx}.

First let us prove that $f \in \HolSh(1/3)$.
Consider an arbitrary $d$-pseudotrajectory $\{y_k\}_{k \in \ZZ}$ with
$d < d_2$. Let us prove that it can be $L d^{1/3}$-shadowed by a trajectory.

If $\{y_k\} \subset U_u$ then by Proposition \ref{propCase3} it can be $2 d^{1/3}$-shadowed by $\{x_k = u\}$.

If $\{y_k\} \subset S^1 \setminus U_u$ then by Proposition \ref{propCase1} it can be $Ld$-shadowed.

In the other cases there exists $l$ such that $y_l \in U_u$ and $y_{l+1}
\notin U_u$. By Proposition \ref{propCase2}
$$
\dist(y_k, f^{k-l}(y_l)) < 2d^{1/3}, \quad k \leq l.
$$
By Proposition \ref{propCase1}
$$
\dist(y_k, f^{k-l}(y_l)) < Ld, \quad k \geq l+1.
$$
Hence $\{y_k\}$ is $Ld^{1/3}$-shadowed by the trajectory $\{x_k =
f^{k-l}(y_l)\}$.


Now let us prove that $f \in \FinHolSh(1/2, 1/2)$.
Consider an arbitrary $d$-pseudotrajectory $\{y_k\}_{k \in [0, 1/d^{1/2}]}$ with
$d < d_2$. Let us prove that it can be $L d^{1/2}$-shadowed by a trajectory.

If $\{y_k\} \subset U_u$ then by Proposition \ref{propCase4} it can be $d^{1/2}$-shadowed by $\{x_k = f^{k-n}(y_n)\}$.

If $\{y_k\} \subset S^1 \setminus U_u$ then by Proposition \ref{propCase1} it can be $L
d$-shadowed.

In the other cases there exists $l$ such that $y_l \in U_u$ and $y_{l+1}
\notin U_u$. From Proposition \ref{propCase4} it is easy to conclude that
$$
\dist(y_k, f^{k-l}(y_l)) < d^{1/2}, \quad k \leq l.
$$
Proposition \ref{propCase1} implies that
$$
\dist(y_k, f^{k-l}(y_l)) < Ld, \quad k \geq l+1.
$$
Hence $\{y_k\}$ is $Ld^{1/2}$-shadowed by the trajectory $\{x_k =
f^{k-l}(y_l)\}$.
\end{proof}

\section{Acknowledgement} The author would like to thank Anatole Katok
for introduction to problem of \Holder shadowing and Andrey Gogolev
for fruitful discussions. The work of the author was supported by the Chebyshev Laboratory (Department of Mathematics and Mechanics, St. Petersburg State University)  under RF Government grant 11.G34.31.0026, JSC ``Gazprom Neft'' and Humboldt Postdoctoral Fellowship (Germany).


\begin{thebibliography}{99}

\bibitem{Ano}
D. V. Anosov. On a class of invariant sets of smooth dynamical
systems. \textit{Proc. 5th Int. Conf. on Nonlin. Oscill.} {\bf 2}, Kiev,
1970, 39-45.

\bibitem{Bar}
L. Barreira, C. Valls. Stable manifolds for nonautonomous equations without exponential
dichotomy. \textit{J. Differential Equations} \textbf{221} (2006), 58–90.

\bibitem{Bow}
R. Bowen. Equilibrium States and the Ergodic Theory of Anosov
Diffeomorphisms. Lecture Notes Math., vol. 470, Springer, Berlin,
1975.

\bibitem{Lat}
C. Chicone; Yu. Latushkin. Evolution semigroups in dynamical systems and differential equations. Mathematical Surveys and Monographs, 70. American Mathematical Society, Providence, RI, 1999

\bibitem{Coff}
C. V. Coffman, J. J. Schaeffer.
Dichotomies for linear difference equations.
\textit{Math. Ann.} \textbf{172} (1967) 139–166.

\bibitem{Coppel}
W. A. Coppel. Dichotomies in stability theory. Lecture Notes in Mathematics, vol. 629, Berlin-Heidelberg-New York: Springer-Verlag, 1978.

\bibitem{Fisher}
T. Fisher, PhD Thesis. PennState, 2006.

\bibitem{Gog}
A. Gogolev. Diffeomorphisms \Holder conjugate to Anosov
diffeomorphisms. \textit{Ergodic Theory Dynam. Systems} \textbf{30} (2010)
441-456.

\bibitem{Yorke1}
S. M. Hammel, J. A. Yorke, and C Grebogi. Do numerical orbits of chaotic dynamical processes represent true orbits. \textit{J. of Complexity} \textbf{3} (1987), 136-145.

\bibitem{Yorke2}
S. M. Hammel, J. A. Yorke, C. Grebogi. Numerical orbits of chaotic processes represent true orbits. \textit{Bulletin of the
American Mathematical Society} \textbf{19} (1988), 465–469

\bibitem{Vietnam}  Nguyen Thieu Huy, Nguyen Van Minh. Exponential dichotomy of difference equations and applications to evolution equations on the half-line. \textit{Advances in difference equations, III.
Comput. Math. Appl.} \textbf{42} (2001), 301–311.


\bibitem{PujKor}
A. Koropecki, E. Pujals. Consequences of the Shadowing Property in low dimensions. \textit{Ergodic Theory and Dynamical Systems} Available on CJO 2013
doi:10.1017/etds.2012.195


\bibitem{Maizel}
A. D. Maizel. On stability of solutions of systems of differential equations. \textit{Trudi Uralskogo Politekhnicheskogo Instituta, Mathematics} \textbf{51} (1954), 20–50.

\bibitem{Mane2}
R. \Mane. Characterizations of AS diffeomorphisms. in:
Geometry and Topology, Lecture Notes Math., vol. 597, Springer,
Berlin, 1977, 389-394.


\bibitem{Mor}
A. Morimoto. The method of pseudo-orbit tracing and stability
of dynamical systems. \textit{Sem. Note} {\bf 39}, Tokyo Univ., 1979.

\bibitem{LipPerSh}
A. V. Osipov, S. Yu. Pilyugin, and S. B. Tikhomirov. Periodic
shadowing and $\Omega$-stability. \textit{Regul. Chaotic Dyn.} \textbf{15} (2010), 404-417.


\bibitem{PalmBook}
K. Palmer. Shadowing in Dynamical Systems. Theory and
Applications. Kluwer, Dordrecht, 2000.

\bibitem{Palm1}
K. J. Palmer. Exponential dichotomies and transversal
homoclinic points.  \textit{J. Differ. Equat.} {\bf 55} (1984), 225-256.

\bibitem{Palm2}
K. J. Palmer. Exponential dichotomies and Fredholm operators.
\textit{Proc. Amer. Math. Soc.} {\bf 104} (1988), 149-156.

\bibitem{PalmNew}
K. J. Palmer. Exponential dichotomies, the shadowing lemma and transversal homoclinic points. Dynamics reported, Vol. 1, 265-306,
Dynam. Report. Ser. Dynam. Systems Appl.,~1, Wiley, Chichester, 1988.


\bibitem{PilBook}
S. Yu. Pilyugin. Shadowing in Dynamical Systems. Lecture Notes
Math., vol. 1706, Springer, Berlin, 1999.

\bibitem{PilRev} Pilyugin S. Yu. Theory of pseudo-orbit shadowing in dynamical systems. \textit{Diff. Eqs.} \textbf{47} (2011), 1929--1938.

\bibitem{PilVar}
S. Yu. Pilyugin. Variational shadowing. \textit{Discr. Cont. Dyn. Syst., ser. B} \textbf{14} (2010) 733-737.

\bibitem{PilGen}
S. Yu. Pilyugin. Generalizations of the notion of
hyperbolicity. \textit{J. Difference Equat. Appl.} {\bf 12} (2006), 271-282.

\bibitem{PilTikhLipSh}
S. Yu. Pilyugin, S. B. Tikhomirov. Lipschitz Shadowing implies
structural stability. \textit{Nonlinearity} \textbf{23} (2010), 2509-2515

\bibitem{Pli}
V. A. Pliss. Bounded solutions of nonhomogeneous linear systems
of differential equations. Probl. Asympt. Theory Nonlin. Oscill.,
Kiev, 1977, 168-173.


\bibitem{Rob}
C. Robinson. Stability theorems and hyperbolicity in dynamical
systems. \textit{Rocky Mount. J. Math.} {\bf 7} (1977), 425-437.



\bibitem{Saw}
K. Sawada. Extended $f$-orbits are approximated by orbits.
\textit{Nagoya Math. J.} {\bf 79} (1980), 33-45.

\bibitem{Slyus}
V. Slyusarchuk.
Exponential dichotomy of solutions of discrete systems.
\textit{Ukrain. Mat. Zh.} \textbf{35} (1983), 109–115.

\bibitem{Todorov}
D. Todorov. Generalizations of analogs of theorems of Maizel and Pliss and their application in Shadowing Theory. \textit{Discr. Cont. Dyn. Syst., ser. A} \textbf{33} (2013), 4187-4205





\end{thebibliography}
\end{document}